\newtheorem{thm}{Theorem}[section]
\newtheorem{lem}[thm]{Lemma}
\theoremstyle{definition}
\newtheorem{defn}[thm]{Definition}
\newtheorem{rem}[thm]{Remark}
\numberwithin{equation}{thm}
\begin{document}
\title[The (ET4) axiom for Extriangulated Categories]
{The (ET4) axiom for Extriangulated Categories}

\author{Xiaoxue Kong, Zengqiang Lin$^*$ and Minxiong Wang}
\address{ School of Mathematical sciences, Huaqiao University,
	Quanzhou\quad 362021,  China.}

\thanks{This work was supported by the Natural Science Foundation of Fujian Province (Grant No. 2020J01075)}
\thanks{$^*$ The corresponding author.}
\thanks{Email: 2100437637@qq.com; zqlin@hqu.edu.cn; mxw@hqu.edu.cn.}
\subjclass[2010]{18E30, 18E10}

\keywords{extriangulated category; $(\mathrm{ET4})$ axiom; homotopy cartesian square; shifted octahedron.}

\begin{abstract}
	Extriangulated categories were introduced by Nakaoka and Palu, which is a simultaneous generalization of exact categories and triangulated categories. The axiom (ET4) for extriangulated categories  is an analogue of the octahedron axiom (TR4) for triangulated categories. In this paper, we introduce  homotopy cartesian squares in pre-extriangulated categories to investigate  the axiom (ET4). We provide several equivalent statements of the axiom $(\mathrm{ET4})$ and find out conditions under which the axiom is self-dual.
\end{abstract}

\maketitle

\section{introduction}
Exact categories  and triangulated categories are two fundamental structures in mathematics. Nakaoka and Palu introduced the notion of extriangulated categories \cite{NP}, which is a simultaneous generalization of exact categories and triangulated categories. The class of extriangulated categories not only contains exact categories and extension-closed subcategories of triangulated categories, but also contains examples which are neither exact nor triangulated \cite{NP,ZZ,HZZ}. This new notion provides a convenient setup for writing down proofs which apply to both exact categories and triangulated categories.

Roughly speaking, a {\em pre-extriangluated category} is an additive category $\mathscr{C}$ equipped with a class of $\mathbb{E}$-triangles satisfying certain morphism axioms (ET3) and (ET3)$^{\text{op}}$, where $\mathbb{E}:\mathscr{C}^{\text{op}}\times\mathscr{C}\rightarrow \text{Ab}$ is an additive bifunctor and $\mathbb{E}$-triangles $A\stackrel{f}{\longrightarrow}B\stackrel{g}{\longrightarrow}C\stackrel{\delta}{\dashrightarrow}$ are given by an additive realization $\mathfrak{s}$ of $\mathbb{E}$. A pre-extriangulated category $(\mathscr{C},\mathbb{E},\mathfrak{s})$ is called {\em extriangulated} if it satisfies axioms (ET4) and (ET4)$^{\text{op}}$, which are similar to the octahedron axiom (TR4) for triangulated categories. To strengthen the connection with axiom (TR4), it is proved that four shifted octahedrons (ET4-1), (ET4-2), (ET4-3) and (ET4-4) hold in extriangulated categories \cite{NP}; see Section 2.2 for details.

In this paper, we further investigate the axiom (ET4). We don't know whether each shifted octahedron implying axiom (ET4) in pre-extriangulated categories. Our first aim is to discuss the equivalent statements of the axiom (ET4). Note that the axiom (TR4) is self-dual but the axiom (ET4) is not. Our second aim is to find out conditions under which the axiom (ET4) is self-dual.

Recall that homotopy cartesian squares in triangulated categories are the triangulated analogues of the pushout and pullback squares in abelian categories. It is well-known that the axiom (TR4) is equivalent to the homotopy cartesian axiom \cite{Nee,Kr}. In this paper we introduce the notion of homotopy cartesian squares and propose a new shifted octahedron (ET4-5) in extriangulated categories, which provides a useful method to achieve our two goals. We have the following main results.

\begin{thm}
	Let $(\mathscr{C},\mathbb{E},\mathfrak{s})$ be a pre-extriangulated category. Then $(\mathrm{ET4}$-$1)$, $(\mathrm{ET4}$-$2)$ and $(\mathrm{ET4}$-$5)$ are equivalent and self-dual.
\end{thm}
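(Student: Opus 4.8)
The strategy is to route every implication through the new axiom $(\mathrm{ET4}\text{-}5)$, exploiting the fact that $(\mathrm{ET4}\text{-}5)$ is \emph{manifestly} self-dual, so that the web of implications collapses. Concretely, I would prove the two equivalences
\[
(\mathrm{ET4}\text{-}1)\ \Longleftrightarrow\ (\mathrm{ET4}\text{-}5)\ \Longleftrightarrow\ (\mathrm{ET4}\text{-}2)
\]
in an \emph{arbitrary} pre-extriangulated category, together with the self-duality $(\mathrm{ET4}\text{-}5)\Leftrightarrow(\mathrm{ET4}\text{-}5)^{\mathrm{op}}$. Since these equivalences then also hold for $(\mathscr C^{\mathrm{op}},\mathbb E^{\mathrm{op}},\mathfrak s^{\mathrm{op}})$, combining them with the self-duality of $(\mathrm{ET4}\text{-}5)$ shows that each of the three axioms is equivalent to its own dual, which is exactly the ``self-dual'' assertion, and the mutual equivalence is immediate.

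For $(\mathrm{ET4}\text{-}1)\Rightarrow(\mathrm{ET4}\text{-}5)$ I would start from a pair of composable $\mathbb E$-triangles, apply $(\mathrm{ET4}\text{-}1)$ to produce the shifted octahedron, and then simply read off from the octahedron the $\mathbb E$-triangle $A\to B\oplus C\to D\dashrightarrow$ (with the appropriate signs in the matrices) whose existence is, by definition, what makes the relevant square homotopy cartesian; the commutativity conditions are already part of the octahedral data. The converse $(\mathrm{ET4}\text{-}5)\Rightarrow(\mathrm{ET4}\text{-}1)$ is the reverse assembly: given two composable $\mathbb E$-triangles one first manufactures, using the morphism axioms $(\mathrm{ET3})$ and $(\mathrm{ET3})^{\mathrm{op}}$, the auxiliary morphisms and $\mathbb E$-triangles needed to feed $(\mathrm{ET4}\text{-}5)$; then one uses the homotopy cartesian square it produces to fill in the remaining vertex and arrows; and finally one checks that every square commutes and that each $\mathbb E$-extension occurring is realized by the prescribed composite/pushed-forward/pulled-back extension. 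The equivalence $(\mathrm{ET4}\text{-}5)\Leftrightarrow(\mathrm{ET4}\text{-}2)$ is then obtained either directly in the same style, or, more efficiently, by applying $(\mathrm{ET4}\text{-}1)\Leftrightarrow(\mathrm{ET4}\text{-}5)$ to the opposite pre-extriangulated category and invoking the duality relation between $(\mathrm{ET4}\text{-}1)$ and $(\mathrm{ET4}\text{-}2)$ recorded in Section~2.2.

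For the self-duality of $(\mathrm{ET4}\text{-}5)$ the key point is that the notion of homotopy cartesian square is intrinsically symmetric. A commutative square is homotopy cartesian with respect to $\delta$ exactly when its transpose is homotopy cartesian with respect to $-\delta$, because the defining $\mathbb E$-triangle is carried to itself (up to sign) by the swap isomorphism $B\oplus C\xrightarrow{\ \sim\ }C\oplus B$; and the defining $\mathbb E$-triangle for $(\mathscr C,\mathbb E,\mathfrak s)$ becomes, after reversing all arrows, the defining $\mathbb E$-triangle for $(\mathscr C^{\mathrm{op}},\mathbb E^{\mathrm{op}},\mathfrak s^{\mathrm{op}})$. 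Tracking the hypothesis and the conclusion of $(\mathrm{ET4}\text{-}5)$ through this symmetry shows that $(\mathrm{ET4}\text{-}5)$ and $(\mathrm{ET4}\text{-}5)^{\mathrm{op}}$ are literally the same statement, which closes the loop.

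I expect the main obstacle to be the bookkeeping in $(\mathrm{ET4}\text{-}5)\Rightarrow(\mathrm{ET4}\text{-}1)$: producing a genuine shifted octahedron — in particular verifying that the $\mathbb E$-triangle on the ``bottom edge'' is realized by precisely the extension demanded by $(\mathrm{ET4}\text{-}1)$ — requires combining the morphism axioms with the additivity of $\mathfrak s$ and a cancellation argument, and one must keep in mind that a homotopy cartesian square is determined only up to (non-canonical) isomorphism, so every uniqueness claim has to be made at the level of $\mathbb E$-extensions rather than of objects or of realized triangles. The self-duality step is conceptually clean by comparison but still forces one to pin down the sign conventions consistently throughout.
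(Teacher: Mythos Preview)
Your central claim—that $(\mathrm{ET4}\text{-}5)$ is ``manifestly'' or ``literally'' self-dual—is false, and the whole strategy collapses without it. The hypothesis of $(\mathrm{ET4}\text{-}5)$ is a pair of $\mathbb{E}$-triangles sharing their \emph{third} term $C$, together with a morphism $g$ between the middle terms commuting with the deflations to $C$. Passing to $\mathscr{C}^{\mathrm{op}}$ turns this into a pair of $\mathbb{E}$-triangles sharing their \emph{first} term, with a morphism between middle terms commuting with the inflations. The symmetry of the homotopy cartesian square under transposition only tells you that the \emph{conclusion} dualizes nicely; it says nothing about the hypothesis, which is genuinely asymmetric. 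So $(\mathrm{ET4}\text{-}5)$ and $(\mathrm{ET4}\text{-}5)^{\mathrm{op}}$ are different statements, and proving their equivalence is precisely the content of the theorem, not a free input. (Relatedly, there is no ``duality relation between $(\mathrm{ET4}\text{-}1)$ and $(\mathrm{ET4}\text{-}2)$'' in Section~2.2: what is recorded there is the implication $(\mathrm{ET4}\text{-}1)\Rightarrow(\mathrm{ET4}\text{-}2)$, not a duality. And the input to $(\mathrm{ET4}\text{-}1)$ is two $\mathbb{E}$-triangles with common third term, not ``two composable $\mathbb{E}$-triangles''.)

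The paper's proof supplies exactly the missing bridge: it takes $(\mathrm{ET4}\text{-}2)\Leftrightarrow(\mathrm{ET4}\text{-}5)$ from Theorem~\ref{thm1} and the cited implication $(\mathrm{ET4}\text{-}1)\Rightarrow(\mathrm{ET4}\text{-}2)$, and then proves by hand that $(\mathrm{ET4}\text{-}2)\Rightarrow(\mathrm{ET4}\text{-}1)^{\mathrm{op}}$. Given $A\to B\to C$ and $A\to D\to E$, one applies $(\mathrm{ET4}\text{-}2)$ once to each triangle (pushing out along $f\colon A\to D$ and along $x\colon A\to B$), obtaining two middle objects $M$ and $M'$ each sitting in a homotopy cartesian square; then $(\mathrm{ET3})$ together with the two-out-of-three isomorphism lemma identifies $M\cong M'$, and one reads off the $(\mathrm{ET4}\text{-}1)^{\mathrm{op}}$ diagram together with the required relation $e_1^*\delta_1+e_2^*\delta_2=0$. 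This is the actual work, and your sketch does not contain it.
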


\begin{thm}
	Let	$(\mathscr{C},\mathbb{E},\mathfrak{s})$ be a pre-extriangulated category.
	Then the following statements are equivalent $:$
	
	$(a)$ $\mathscr{C}$ satisfies  $(\mathrm{ET4})$.
	
	$(b)$ $\mathscr{C}$ satisfies  $(\mathrm{ET4}$-$1)$ and \textup{(ET0)}.
	
	$(c)$ $\mathscr{C}$ satisfies  $(\mathrm{ET4}$-$2)$ and \textup{(ET0)}.
	
	$(d)$ $\mathscr{C}$ satisfies  $(\mathrm{ET4}$-$3)$ and \textup{(ET0)}.
	
	$(e)$ $\mathscr{C}$ satisfies  $(\mathrm{ET4}$-$5)$ and \textup{(ET0)}.
\end{thm}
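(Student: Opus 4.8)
My plan is to prove the two equivalences $(a)\Leftrightarrow(b)$ and $(a)\Leftrightarrow(d)$; the remaining equivalences $(b)\Leftrightarrow(c)\Leftrightarrow(e)$ then follow at once from Theorem~1.1, since (ET4-1), (ET4-2) and (ET4-5) are mutually equivalent, so conjoining each with (ET0) yields three mutually equivalent statements. Moreover, once $(a)\Leftrightarrow(b)$ is in hand, $(a)\Leftrightarrow(d)$ reduces to showing that (ET4-3) and (ET4-1) imply one another in the presence of (ET0), i.e. to a comparison of two shifted octahedra mediated by (ET0).

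For $(a)\Rightarrow(b)$ I would argue in two halves. That (ET4) produces the shifted octahedron (ET4-1) is the Nakaoka--Palu construction \cite{NP} recalled in Section~2.2; I would re-run it inside a merely pre-extriangulated category and check that it invokes only (ET3), (ET3)$^{\text{op}}$ and (ET4) itself, so that no use of (ET4)$^{\text{op}}$, hence no spurious self-duality, is smuggled in. That (ET4) implies (ET0) I expect to read off a degenerate instance of (ET4): feeding it a configuration in which one of the two input $\mathbb{E}$-triangles is split, or the relevant leg is an identity, collapses the octahedral diagram to exactly the configuration asserted by (ET0), and the only care needed is to track which connecting $\mathbb{E}$-extension appears where under the maps $f_{*}$ and $f^{*}$.

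The heart of the theorem is the return implication $(b)\Rightarrow(a)$, which I would prove in its homotopy-cartesian incarnation, namely as $(e)\Rightarrow(a)$. Starting from an (ET4)-datum --- $\mathbb{E}$-triangles $A\xrightarrow{f}B\xrightarrow{f'}C\overset{\delta}{\dashrightarrow}$ and $B\xrightarrow{g}D\xrightarrow{g'}F\overset{\delta'}{\dashrightarrow}$ --- I would form the homotopy cartesian square built from $f'$ and $g$ supplied by (ET4-5). This immediately yields the third object $E$, the morphisms $d\colon C\to E$, $e\colon E\to F$, $h\colon A\to D$ and $h'\colon D\to E$, the $\mathbb{E}$-triangle $C\xrightarrow{d}E\xrightarrow{e}F\dashrightarrow$, and the two commuting squares of the octahedral diagram. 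What (ET4-5) does not by itself deliver is the mediating $\mathbb{E}$-extension $\delta''\in\mathbb{E}(E,A)$ realized by $A\xrightarrow{h}D\xrightarrow{h'}E$, together with the compatibilities $d^{*}\delta''=\delta$ and $f_{*}\delta''=e^{*}\delta'$; this is precisely the slack that (ET0) is designed to remove. So I would invoke (ET0) to pin down $\delta''$ and to verify these two identities, after which the full (ET4)-diagram is assembled from the homotopy cartesian square by naturality of $\mathbb{E}$ and the morphism axioms (ET3) and (ET3)$^{\text{op}}$.

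It remains to compare (ET4-1) and (ET4-3) modulo (ET0): from an instance of (ET4-1) one already recovers all the objects and morphisms needed for (ET4-3), and (ET0) converts the connecting-extension data attached to the former into the form demanded by the latter, the reverse passage being symmetric. I expect the main obstacle throughout to be exactly this compatibility bookkeeping: (ET4), (ET4-1) and (ET4-3) differ essentially only in where a pushforward or a pullback is applied to a connecting extension, so the delicate point is to check that the identities emerging from (ET0) together with the naturality of the bifunctor $\mathbb{E}$ match exactly those required by (ET4), with no discrepancy of sign or of which variable of $\mathbb{E}$ is acted on. A secondary, more clerical obstacle is to keep every diagram chase strictly inside ``pre-extriangulated $+$ the currently assumed shifted octahedron $+$ (ET0)'', so that the genuine asymmetry between (ET4) and (ET4)$^{\text{op}}$ is never silently bypassed.
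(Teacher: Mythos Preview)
Your reduction $(b)\Leftrightarrow(c)\Leftrightarrow(e)$ via Theorem~1.1 is exactly what the paper does. One minor point first: $(\mathrm{ET4})\Rightarrow(\mathrm{ET0})$ needs no degenerate instance, since the second row of the $(\mathrm{ET4})$ output diagram is already an $\mathbb{E}$-triangle whose inflation equals $gf$ by commutativity of the upper-left square; $(\mathrm{ET0})$ is read off directly.

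The substantive gap is in your direct route $(e)\Rightarrow(a)$. The axiom $(\mathrm{ET4}\text{-}5)$ does not ``build a homotopy cartesian square from $f'$ and $g$'': its hypothesis is a pair of $\mathbb{E}$-triangles sharing their last term together with a map of middle terms compatible with the deflations, and two maps out of $B$ are not of this shape. Using the equivalent $(\mathrm{ET4}\text{-}2)$ on the second input triangle together with $f'$ one can indeed obtain the object $E$, the third-column $\mathbb{E}$-triangle realizing $f'_{*}\delta'$, and the map $h'$ into $E$, but this does \emph{not} exhibit the second row $A\xrightarrow{gf}\,\cdot\,\xrightarrow{h'}E$ as an $\mathbb{E}$-triangle. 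Invoking $(\mathrm{ET0})$ only tells you that $gf$ is an inflation, hence that \emph{some} $\mathbb{E}$-triangle with first map $gf$ exists; it does not identify its third term with the $E$ you already built, nor its deflation with $h'$, nor deliver the compatibilities $d^{*}\delta''=\delta$ and $f_{*}\delta''=e^{*}\delta'$. The paper avoids this by proving the cycle $(a)\Rightarrow(b)\Leftrightarrow(e)\Rightarrow(d)\Rightarrow(a)$. In $(d)\Rightarrow(a)$, $(\mathrm{ET0})$ is invoked \emph{first} to manufacture the missing $\mathbb{E}$-triangle with inflation $gf$, so that the full hypothesis of $(\mathrm{ET4}\text{-}3)$ is available; $(\mathrm{ET4}\text{-}3)$ then outputs the $(\mathrm{ET4})$ diagram verbatim. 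The real content is $(e)\Rightarrow(d)$: with all three $\mathbb{E}$-triangles already given, two applications of $(\mathrm{ET4}\text{-}5)^{\mathrm{op}}$ (licensed by the self-duality in Theorem~1.1) followed by a short matrix computation produce the third column together with the required identities. This also sharpens your last paragraph: $(\mathrm{ET4}\text{-}5)\Rightarrow(\mathrm{ET4}\text{-}3)$ holds without any appeal to $(\mathrm{ET0})$, and the converse direction passes through $(\mathrm{ET4})$.
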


Recall that the first morphism in an $\mathbb{E}$-triangle is called an {\em inflation}. We say $\mathscr{C}$  satisfies (ET0) if a composition of two inflations is also an inflation.

\begin{thm}
	Let $(\mathscr{C},\mathbb{E},\mathfrak{s})$ be a pre-extriangulated category. If $\mathscr{C}$ satisfies  $(\mathrm{ET0})$  and  $(\mathrm{ET0})^{\textup{op}}$, then  $(\mathrm{ET4})$, $(\mathrm{ET4}$-$1)$, $(\mathrm{ET4}$-$2)$, $(\mathrm{ET4}$-$3)$, $(\mathrm{ET4}$-$4)$ and $(\mathrm{ET4}$-$5)$ are equivalent and self-dual.
\end{thm}

The remaining part of this paper is organized as follows. In Section 2, we make some preliminaries on extriangulated categories. In Section 3, we introduce homotopy cartesian squares in pre-extriangulated categories and prove our main results.

\section{Preliminaries}

In this section, we first collect some definitions and facts on extriangulated categories, then provide several shifted octahedrons. The basic reference is \cite{NP}.

\subsection{Defintions and facts on extriangulated categories}

Let $\mathscr{C}$ be an additive category equipped with a biadditive functor  $\mathbb{E}:\mathscr{C}^{op} \times \mathscr{C}\rightarrow \mathrm{Ab}$, where Ab is the category of abelian groups. For any objects  $A , C\in \mathscr{C}$, an element $\delta \in \mathbb{E}(C,A)$ is called an $\mathbb{E}$-$\mathit{extension}$.
For any  $a\in\mathscr{C}(A,A')$ and  $c\in\mathscr{C}(C,C')$, we denote by $a_{*}\delta$ the $\mathbb{E}$-extension $\mathbb{E}(C,a)(\delta)\in\mathbb{E}(C,A')$ and by $c^{*}\delta$ the $\mathbb{E}$-extension $\mathbb{E}(c,A)(\delta)\in\mathbb{E}(C',A)$.
The zero element  $0\in \mathbb{E}(C,A) $ is called the $\mathit{split}$ $\mathbb{E}$-$\mathit{extension}$.
Let $\delta\in \mathbb{E}(C,A)$ and $\delta'\in \mathbb{E}(C',A')$. A {\em morphism} $(a,c):\delta\rightarrow \delta'$ of $\mathbb{E}$-extensions is a pair of morphisms $a\in \mathscr{C}(A,A')$ and $c\in \mathscr{C}(C,C')$  such that $a_{*}\delta=c^{*}\delta'.$
We denote by $\delta\oplus\delta'\in\mathbb{E}(C\oplus C', A\oplus A')$ the element corresponding to $(\delta,0,0,\delta')$ through the following isomorphism
\begin{center}
	$\mathbb{E}(C\oplus C',A\oplus  A')  \cong \mathbb{E}(C,A)\oplus\mathbb{E}(C,A') \oplus\mathbb{E}(C',A) \oplus\mathbb{E}(C',A')$.
\end{center}

Let $A,C\in \mathscr{C}$ be any pair of objects. Two sequences of morphisms
$A\stackrel{x}{\longrightarrow}B\stackrel{y}{\longrightarrow}C$ and $A\stackrel{x'}{\longrightarrow}B'\stackrel{y'}{\longrightarrow}C$
are said to be $\mathit{equivalent}$ if there exists an isomorphism  $b \in \mathscr{C}(B,B')$ such that the following diagram
$$\xymatrix{
	A\ar@{=}[d]\ar[r]^x &B\ar[d]^b_{\simeq}\ar[r]^y &C\ar@{=}[d]\\
	A\ar[r]^{x'} &B'\ar[r]^{y'} &C}	$$
is commutative. We denote the equivalence class of $A\stackrel{x}{\longrightarrow}B\stackrel{y}{\longrightarrow}C$ by $[A\stackrel{x}{\longrightarrow}B\stackrel{y}{\longrightarrow}C]$. For any $A,C\in\mathscr{C}$, we denote as
$$0=[A\stackrel{\left(\begin{smallmatrix}
	1\\0
	\end{smallmatrix}\right) }{\longrightarrow}A\oplus C\stackrel{\left(\begin{smallmatrix}
	0,1\\
	\end{smallmatrix}\right) }{\longrightarrow}C].$$
For any $[A\stackrel{x}{\longrightarrow}B\stackrel{y}{\longrightarrow}C]$ and $[A'\stackrel{x'}{\longrightarrow}B'\stackrel{y'}{\longrightarrow}C']$, we denote as
$$[A\oplus A'\stackrel{x\oplus x'}{\longrightarrow}B\oplus B'\stackrel{y\oplus y'}{\longrightarrow}C\oplus C'] =[A\stackrel{x}{\longrightarrow}B\stackrel{y}{\longrightarrow}C] \oplus [A'\stackrel{x'}{\longrightarrow}B'
\stackrel{y'}{\longrightarrow}C'].$$

\begin{defn}(\cite[Definitions 2.9]{NP})
	Let $ \mathfrak{s} $ be a correspondence which associates an equivalence class
	$\mathfrak{s}(\delta)=[A\stackrel{x}{\longrightarrow}B\stackrel{y}{\longrightarrow}C]$
	to any $\mathbb{E}$-extension $\delta \in \mathbb{E}(C,A)$.
	If $\mathfrak{s}(\delta) =[A\stackrel{x}{\longrightarrow}B\stackrel{y}{\longrightarrow}C]$, then we say the sequence  $A\stackrel{x}{\longrightarrow}B\stackrel{y}{\longrightarrow}C$ $\mathit{realizes}$ $\delta$.
	This $\mathfrak{s}$  is called a $\mathit{realization}$ of $\mathbb{E}$, if it satisfies the following condition:
	
	Let $ (a,c) :\delta \rightarrow \delta' $ be a morphism of $\mathbb{E}$-triangles,
	$\mathfrak{s}(\delta) =[A\stackrel{x}{\longrightarrow}B\stackrel{y}{\longrightarrow}C]$  and
	$\mathfrak{s}(\delta') =[A'\stackrel{x'}{\longrightarrow}B'\stackrel{y'}{\longrightarrow}C']$.
	Then  there exists a morphism $ b \in \mathscr{C}(B, B') $ which makes the following diagram commutative.
	$$\xymatrix{
		A \ar[r]^x \ar[d]^a & B\ar[r]^y \ar[d]^{b} & C \ar[d]^c&\\
		A'\ar[r]^{x'} & B' \ar[r]^{y'} & C'   }$$
	In the above situation, we say that the triplet $(a,b,c)$ $\mathit{realizes}$ $(a,c)$.
\end{defn}

\begin{defn}(\cite[Defintion 2.10]{NP})	
	A realization $\mathfrak{s}$ of $\mathbb{E}$ is called $\mathit{additive}$ if it satisfies the following conditions.
	
	$(1)$ For any objects $A,C\in\mathscr{C}$, the split $\mathbb{E}$-extension $0 \in \mathbb{E}(C,A)$ satisfies $\mathfrak{s}(0) = 0$.
	
	$(2)$ For any $\mathbb{E}$-extensions $\delta \in \mathbb{E} (C,A)$ and $\delta'\in \mathbb{E}(C',A')$, we have 	
	$\mathfrak{s}(\delta \oplus \delta') = \mathfrak{s}(\delta) \oplus \mathfrak{s}(\delta')$.
\end{defn}

Let  $\mathfrak{s}$ be an additive realization of $\mathbb{E}$.
A sequence $A\stackrel{x}{\longrightarrow}B\stackrel{y}{\longrightarrow}C$ is called a $\mathit{conflation}$ if realizes some $\mathbb{E}$-extension $\delta \in \mathbb{E}(C,A)$. In this case, $x$ is called an $\mathit{inflation}$, $y$ is called a $\mathit{deflation}$ and  $A\stackrel{x}{\longrightarrow}B\stackrel{y}{\longrightarrow}C\stackrel{\delta}{\dashrightarrow}$ is called an $\mathbb{E}$-$\mathit{triangle}$.

Let $A\stackrel{x}{\longrightarrow}B\stackrel{y}{\longrightarrow}C\stackrel{\delta}{\dashrightarrow}$
and $A'\stackrel{x'}{\longrightarrow}B'\stackrel{y'}{\longrightarrow}C'\stackrel{\delta'}{\dashrightarrow}$ be $\mathbb{E}$-triangles.
If $ (a,c) :\delta \rightarrow \delta' $ is a morphism of $\mathbb{E}$-triangles and  $(a,b,c)$ realizes $(a,c)$, then we write it as
$$\xymatrix{
	A \ar[d]_{a} \ar[r]^{x} & B \ar[d]_{b} \ar[r]^{y} & C\ar[d]_{c} \ar@{-->}[r]^{\delta}& \\
	A'\ar[r]^{x'} &B'\ar[r]^{y'} & C'\ar@{-->}[r]^{\delta'}&}$$
and call  $(a,b,c)$ a $\mathit{morphism}$ of $\mathbb{E}$-triangles.

\begin{defn}(\cite[Definition 2.12]{NP}) Let $\mathscr{C}$ be an additive category.
	A triplet $(\mathscr{C} ,\mathbb{E},\mathfrak{s})$ is called a $\mathit{pre}$-$\mathit{extriangulated }$ $\mathit{ category}$ if it satisfies the following conditions.
	
	$(\mathrm{ET1})$ $\mathbb{E}:\mathscr{C}^{\text{op}} \times \mathscr{C}\rightarrow \mathrm{Ab}$  is an additive bifunctor.
	
	$(\mathrm{ET2})$ $\mathfrak{s}$ is an additive realization of $\mathbb{E}$.
	
	$(\mathrm{ET3})$ Each  commutative diagram
	$$\xymatrix{
		A \ar[d]_{a} \ar[r]^{x} & B \ar[d]_{b} \ar[r]^{y} & C\ar@{-->}[r]^{\delta}& \\
		A'\ar[r]^{x'} &B'\ar[r]^{y'} & C'\ar@{-->}[r]^{\delta'}&}$$
	whose rows are $\mathbb{E}$-triangles can be completed to a morphism of $\mathbb{E}$-triangles.
	
	$(\mathrm{ET3})^{\mathrm{op}}$ Each  commutative diagram
	$$\xymatrix{
		A  \ar[r]^{x} & B \ar[d]_{b} \ar[r]^{y} & C \ar[d]_{c}\ar@{-->}[r]^{\delta}&  \\
		A'\ar[r]^{x'} &B'\ar[r]^{y'} & C'\ar@{-->}[r]^{\delta'}&}
	$$
	whose rows are $\mathbb{E}$-triangles can be completed to a morphism of $\mathbb{E}$-triangles.
	
	If the triplet $(\mathscr{C} ,\mathbb{E},\mathfrak{s})$ moreover satisfies the following axioms, then it is called an {\em extriangulated category}:
	
	(ET4) Let $A\stackrel{f}{\longrightarrow}B\stackrel{f'}{\longrightarrow}D\stackrel{\delta}{\dashrightarrow}$   and $B\stackrel{g}{\longrightarrow}C\stackrel{g'}{\longrightarrow}F\stackrel{\delta'}{\dashrightarrow}$ be $\mathbb{E}$-triangles.
	There exists  a commutative diagram
	$$\xymatrix{
		A \ar@{=}[d]\ar[r]^{f} &B\ar[d]^{g} \ar[r]^{f'} & D\ar@{-->}[d]^{d}\ar@{-->}[r]^{\delta}& \\
		A \ar@{-->}[r]^{h} & C\ar[d]^{g'} \ar@{-->}[r]^{h'} & E \ar@{-->}[d]^{e}\ar@{-->}[r]^{\delta''}& \\
		& F\ar@{-->}[d]^{\delta'}\ar@{=}[r] & F\ar@{-->}[d]^{f'_{*}\delta'}&\\
		& & &   }$$		
	such that the second row and the third column are $\mathbb{E}$-triangles, moreover,  $d^{*}\delta''=\delta$ and
	$f_{*}\delta''=e^{*}\delta'$.
	
	(ET4)$^{\text{op}}$ Let $D\stackrel{f'}{\longrightarrow}A\stackrel{f}{\longrightarrow}B\stackrel{\delta}{\dashrightarrow}$  and $F\stackrel{g'}{\longrightarrow}B\stackrel{g}{\longrightarrow}C\stackrel{\delta'}{\dashrightarrow}$  be $\mathbb{E}$-triangles. Then there exists  a commutative diagram
	$$\xymatrix{
		D \ar@{=}[d]\ar@{-->}[r]^{d} &E\ar@{-->}[d]^{h'} \ar@{-->}[r]^{e} & F\ar[d]^{g'}\ar@{-->}[r]^{g'^{*}\delta}& \\
		D\ar[r]^{f'} & A\ar@{-->}[d]^{h} \ar[r]^{f} & B\ar[d]^{g} \ar@{-->}[r]^{\delta}&\\
		& C\ar@{-->}[d]^{\delta''}\ar@{=}[r] & C \ar@{-->}[d]^{\delta'}& \\ &&&}$$	
	such that the first row and the second column are $\mathbb{E}$-triangles, moreover,
	$\delta'=e_{*}\delta''$ and  $d_{*}\delta=g^{*}\delta''$.
\end{defn}

\begin{lem} \textup{(}\cite[Proposition 3.3]{NP}\textup{)}\label{lem2.0}
	Let $(\mathscr{C},\mathbb{E},\mathfrak{s})$ be a pre-extriangulated category. Then for any $\mathbb{E}$-triangle $A\overset{x}{\longrightarrow}B\overset{y}{\longrightarrow}C\overset{\delta}{\dashrightarrow}$, the following sequences are exact:
	$$\mathscr{C}(C,-)\rightarrow\mathscr{C}(B,-)\rightarrow\mathscr{C}(A,-)\rightarrow\mathbb{E}(C,-)\rightarrow\mathbb{E}(B,-),$$
	$$\mathscr{C}(-,A)\rightarrow\mathscr{C}(-,B)\rightarrow\mathscr{C}(-,C)\rightarrow\mathbb{E}(-,A)\rightarrow\mathbb{E}(-,B).$$
\end{lem}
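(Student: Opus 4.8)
The plan is to establish the first sequence completely and to obtain the second by the formal dual, replacing $(\mathrm{ET3})$ by $(\mathrm{ET3})^{\mathrm{op}}$ and interchanging inflations with deflations throughout. Fixing a test object $W$ and writing the first sequence as
$$\mathscr{C}(C,W)\longrightarrow\mathscr{C}(B,W)\longrightarrow\mathscr{C}(A,W)\xrightarrow{\ \delta^{\#}\ }\mathbb{E}(C,W)\xrightarrow{\ \mathbb{E}(y,W)\ }\mathbb{E}(B,W),$$
where the first two arrows are precomposition with $y$ and with $x$ and $\delta^{\#}(a)=a_{*}\delta$, I would check exactness separately at the three middle terms, and at each one split the work into the easy inclusion $\mathrm{im}\subseteq\ker$ and the substantive inclusion $\ker\subseteq\mathrm{im}$.

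First I would record the composition-vanishing identities $y\circ x=0$, $x_{*}\delta=0$ in $\mathbb{E}(C,B)$, and $y^{*}\delta=0$ in $\mathbb{E}(B,A)$, which yield $\mathrm{im}\subseteq\ker$ at all three spots once combined with the bifunctoriality of $\mathbb{E}$ from $(\mathrm{ET1})$ (for instance $\mathbb{E}(y,W)(a_{*}\delta)=a_{*}(y^{*}\delta)=0$). The relation $y\circ x=0$ is clean: since $\mathbb{E}(0,A)=0$, the pair $(1_{A},0)$ is a morphism of $\mathbb{E}$-extensions from the split extension in $\mathbb{E}(0,A)$, realized by $[A\xrightarrow{1_{A}}A\to 0]$, to $\delta$; applying the defining property of the realization $\mathfrak{s}$ produces $b\colon A\to B$ with $b=x$ and $y\circ b=0$, that is, $y\circ x=0$. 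The identities $x_{*}\delta=0$ and $y^{*}\delta=0$ I would obtain analogously, by realizing the canonical morphisms $(x,1_{C})\colon\delta\to x_{*}\delta$ and $(1_{A},y)\colon y^{*}\delta\to\delta$ and extracting, with the help of $y\circ x=0$, a splitting of the relevant inflation or deflation.

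For the reverse inclusions I would treat the purely $\mathscr{C}$-level terms first. At $\mathscr{C}(B,W)$, given $\psi$ with $\psi x=0$, I would compare $\delta$ with the split $\mathbb{E}$-triangle $0\to W\xrightarrow{1_{W}}W\dashrightarrow 0$: the left square with top $x$ and verticals $0\colon A\to 0$ and $\psi$ commutes exactly because $\psi x=0$, so $(\mathrm{ET3})$ completes it to a morphism of $\mathbb{E}$-triangles and returns $c\colon C\to W$ with $cy=\psi$. At $\mathscr{C}(A,W)$, given $a$ with $a_{*}\delta=0$, the triangle realizing $a_{*}\delta$ is split, so realizing the morphism $(a,1_{C})\colon\delta\to a_{*}\delta$ gives a map $B\to W\oplus C$ whose first component $\psi$ satisfies $\psi x=a$. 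The dual versions of these two arguments, run with $(\mathrm{ET3})^{\mathrm{op}}$ and the split triangle $A\xrightarrow{1_{A}}A\to 0$, establish the corresponding $\mathscr{C}$-level exactness for the second sequence, and crucially for every $\mathbb{E}$-triangle. I would then bootstrap to the term $\mathbb{E}(C,W)$: given $\rho$ with $y^{*}\rho=0$, realize $\rho$ by $W\xrightarrow{u}V\xrightarrow{v}C$; the vanishing $y^{*}\rho=0$ splits the pullback of $\rho$ along $y$ and produces $w\colon B\to V$ with $vw=y$, whence $v(wx)=yx=0$; the already-proven covariant $\mathscr{C}$-exactness for the triangle $\rho$ then factors $wx=ua$ for some $a\colon A\to W$, and feeding the square with legs $(a,w)$ into $(\mathrm{ET3})$ yields a morphism $(a,c)\colon\delta\to\rho$ with $cy=y$, after which a further comparison pins down $a_{*}\delta=\rho$.

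The step I expect to be the main obstacle is this last one at $\mathbb{E}(C,W)$ (and its dual at $\mathbb{E}(W,A)$), together with the foundational identities $x_{*}\delta=0$ and $y^{*}\delta=0$: in each case one must convert the algebraic vanishing of an $\mathbb{E}$-extension into the splitting of a concrete realizing conflation, keep the resulting pushout/pullback extensions (available because $\mathbb{E}$ is a bifunctor) synchronized with their realizations under $\mathfrak{s}$, and correct the comparison morphism by a map factoring through $y$ so as to secure the exact equality $a_{*}\delta=\rho$ rather than merely a relation over some $c\colon C\to C$ with $cy=y$. Once the first sequence is complete, the second follows verbatim with all variances reversed, using $(\mathrm{ET3})^{\mathrm{op}}$ in place of $(\mathrm{ET3})$.
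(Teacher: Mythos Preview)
The paper does not supply a proof of this lemma: it is quoted verbatim as \cite[Proposition~3.3]{NP} and used as a black box, so there is no in-paper argument to compare against. Your outline is essentially the standard proof from \cite{NP}, and the reductions you describe (split triangles plus $(\mathrm{ET3})$/$(\mathrm{ET3})^{\mathrm{op}}$ for the $\mathscr{C}$-level exactness, then bootstrapping to the $\mathbb{E}$-terms) are correct in spirit.

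There is, however, one genuine soft spot, precisely where you flag it. At $\mathbb{E}(C,W)$ you produce $w\colon B\to V$ with $vw=y$, factor $wx=ua$, and then feed the \emph{left} square into $(\mathrm{ET3})$ to obtain a morphism $(a,w,c)$ with $a_{*}\delta=c^{*}\rho$ and $cy=y$. From this you propose to ``correct by a map factoring through $y$'' to force $a_{*}\delta=\rho$. That correction does not go through: modifying $w$ by $u\beta$ leaves $a_{*}\delta$ unchanged (since $(\beta x)_{*}\delta=0$), and reducing $c$ to $1_{C}$ would require $c-1$ to factor through $v$, which is exactly the statement you are trying to prove. The clean fix is to bypass this entirely: apply $(\mathrm{ET3})^{\mathrm{op}}$ to the \emph{right} square
\[
\xymatrix{
B \ar[r]^{y}\ar[d]_{w} & C \ar@{=}[d]\\
V \ar[r]^{v} & C
}
\]
(which commutes by construction of $w$) to obtain directly a morphism $(a,w,1_{C})$ of $\mathbb{E}$-triangles, whence $a_{*}\delta=1_{C}^{*}\rho=\rho$. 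With this one-line replacement your argument is complete; the dual exactness at $\mathbb{E}(W,A)$ then follows by the symmetric use of $(\mathrm{ET3})$.
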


\begin{lem} \textup{(}\cite[Corollary 3.5]{NP}\textup{)}\label{lem 2.1}
	Let	$(\mathscr{C},\mathbb{E},\mathfrak{s})$ be a pre-extriangulated category. Assume that the following
	$$\xymatrix{
		A \ar[d]_{a} \ar[r]^{x} & B  \ar[r]^{y}\ar[d]_{b} & C \ar[d]_{c} \ar@{-->}[r]^{\delta} &  \\
		A'\ar[r]^{x'} & B' \ar[r]^{y'} & C' \ar@{-->}[r]^{\delta'} &  }
	$$
	is a morphism of $\mathbb{E}$-triangles. Then the following statements are equivalent.
	
	\textup{(1)} $a$ factors through $x$.
	
	\textup{(2)} $a_*\delta=c^*\delta'=0$.
	
	\textup{(3)} $c$ factors through $y'$. 	
\end{lem}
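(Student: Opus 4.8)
The plan is to funnel all three conditions through a single $\mathbb{E}$-extension. Since $(a,c)\colon\delta\to\delta'$ is a morphism of $\mathbb{E}$-extensions, we have $a_*\delta=c^*\delta'$ in $\mathbb{E}(C,A')$, so condition $(2)$ is precisely the assertion that this common element vanishes. Consequently it suffices to establish the two separate equivalences ``$(1)\Leftrightarrow a_*\delta=0$'' and ``$(3)\Leftrightarrow c^*\delta'=0$''; once these are in hand, the cycle $(1)\Rightarrow(2)\Rightarrow(3)\Rightarrow(1)$ closes automatically, because $a_*\delta$ and $c^*\delta'$ are literally the same element of $\mathbb{E}(C,A')$.

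For the first equivalence I would apply the covariant exact sequence of Lemma \ref{lem2.0}, attached to the \emph{top} $\mathbb{E}$-triangle $A\xrightarrow{x}B\xrightarrow{y}C\xrightarrow{\delta}$ and evaluated at the object $A'$:
$$\mathscr{C}(B,A')\xrightarrow{\mathscr{C}(x,A')}\mathscr{C}(A,A')\longrightarrow\mathbb{E}(C,A'),$$
where the second map is the connecting homomorphism $a'\mapsto a'_*\delta$ recalled from \cite{NP}. By exactness, $a_*\delta=0$ if and only if $a$ belongs to the image of $\mathscr{C}(x,A')$, that is, $a$ factors through $x$. Dually, I would apply the contravariant exact sequence of Lemma \ref{lem2.0} to the \emph{bottom} $\mathbb{E}$-triangle $A'\xrightarrow{x'}B'\xrightarrow{y'}C'\xrightarrow{\delta'}$, evaluated at $C$, namely $\mathscr{C}(C,B')\xrightarrow{\mathscr{C}(C,y')}\mathscr{C}(C,C')\longrightarrow\mathbb{E}(C,A')$ with connecting map $c'\mapsto(c')^*\delta'$; exactness then yields that $c^*\delta'=0$ if and only if $c$ factors through $y'$. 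Chaining these two equivalences through the identity $a_*\delta=c^*\delta'$ completes the argument.

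I do not anticipate a genuine obstacle; the proof is essentially a two-line application of the long exact sequences of Lemma \ref{lem2.0} once the inputs are chosen correctly, and $(\mathrm{ET3})$, $(\mathrm{ET3})^{\mathrm{op}}$ are not even needed. The only delicate point is bookkeeping the variance: the top triangle must enter the \emph{covariant} sequence in its \emph{second} argument $A'$ in order to detect $a$, while the bottom triangle must enter the \emph{contravariant} sequence in its \emph{first} argument $C$ in order to detect $c$, and one must invoke the explicit description of the connecting morphisms rather than bare exactness. If one prefers to keep the implications $(1)\Rightarrow(2)$ and $(3)\Rightarrow(2)$ self-contained, they follow at once: $a=px$ implies $a_*\delta=p_*(x_*\delta)=0$ by the standard vanishing $x_*\delta=0$ for any $\mathbb{E}$-triangle, and dually $c=qy'$ implies $c^*\delta'=q^*\bigl((y')^*\delta'\bigr)=0$.
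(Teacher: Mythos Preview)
Your proposal is correct. The paper does not supply its own proof of this lemma; it merely cites \cite[Corollary~3.5]{NP}, and your argument is precisely the standard one given there: use the identity $a_*\delta=c^*\delta'$ coming from the morphism of extensions, then read off each equivalence from the exact sequences of Lemma~\ref{lem2.0} with the explicit connecting maps $(-)_*\delta$ and $(-)^*\delta'$.
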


\begin{lem} \textup{(}\cite[Corollary 3.6]{NP}\textup{)} \label{lem2.09}
	Let	$(\mathscr{C},\mathbb{E},\mathfrak{s})$ be a pre-extriangulated category. Assume that $(a,b,c)$ is a morphism of $\mathbb{E}$-triangles.
	If  two  of $a$, $b$ and $c$ are isomorphisms, then so is the third.
\end{lem}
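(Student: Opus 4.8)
The plan is to treat separately the three cases: (i) $a,b$ isomorphisms, show $c$ is an isomorphism; (ii) $b,c$ isomorphisms, show $a$ is an isomorphism; (iii) $a,c$ isomorphisms, show $b$ is an isomorphism. There is no rotation of $\mathbb{E}$-triangles available in a pre-extriangulated category, so there is no formal symmetry merging these cases and each is done by hand. Write the two $\mathbb{E}$-triangles as $A\xrightarrow{x}B\xrightarrow{y}C\overset{\delta}{\dashrightarrow}$ and $A'\xrightarrow{x'}B'\xrightarrow{y'}C'\overset{\delta'}{\dashrightarrow}$.

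For (i), I would apply the covariant functor $\mathscr{C}(T,-)$ for an arbitrary $T\in\mathscr{C}$. By Lemma \ref{lem2.0}, the triple $(a,b,c)$ induces a morphism from the five-term exact sequence of abelian groups $\mathscr{C}(T,A)\to\mathscr{C}(T,B)\to\mathscr{C}(T,C)\to\mathbb{E}(T,A)\to\mathbb{E}(T,B)$ to the analogous sequence attached to the primed triangle, with vertical maps $a_{*},b_{*},c_{*},a_{*},b_{*}$. Every square commutes: those not involving the connecting map by functoriality of $\mathscr{C}(T,-)$ and $\mathbb{E}(T,-)$, and the connecting square because $a_{*}\delta=c^{*}\delta'$ and push-forwards commute with pull-backs. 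Since $a$ and $b$ are isomorphisms, every vertical map except possibly the middle one $c_{*}$ is an isomorphism, so the five lemma forces $c_{*}\colon\mathscr{C}(T,C)\to\mathscr{C}(T,C')$ to be an isomorphism; as $T$ is arbitrary, Yoneda's lemma gives that $c$ is an isomorphism. Case (ii) is the same argument applied to the contravariant functor $\mathscr{C}(-,T)$ and the other exact sequence of Lemma \ref{lem2.0}: now $a^{*}$ occupies the middle slot of a morphism of five-term exact sequences whose other four vertical maps $c^{*},b^{*},c^{*},b^{*}$ are isomorphisms, and the five lemma together with Yoneda's lemma gives that $a$ is an isomorphism.

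Case (iii) is the main obstacle, since $b$ never lies in a middle slot of either exact sequence of Lemma \ref{lem2.0} and no single application of the five lemma reaches it. My plan is: first, $(a^{-1},c^{-1})$ is a morphism of $\mathbb{E}$-extensions $\delta'\to\delta$; indeed, applying $(a^{-1})_{*}$ and then $(c^{-1})^{*}$ to the relation $a_{*}\delta=c^{*}\delta'$, and using that push-forwards commute with pull-backs, gives $(a^{-1})_{*}\delta'=(c^{-1})^{*}\delta$. Since $\mathfrak{s}$ is a realization, there is $b'\colon B'\to B$ such that $(a^{-1},b',c^{-1})$ realizes $(a^{-1},c^{-1})$, and forming the two composites with $(a,b,c)$ gives morphisms of $\mathbb{E}$-triangles $(\mathrm{id}_{A},b'b,\mathrm{id}_{C})\colon\delta\to\delta$ and $(\mathrm{id}_{A'},bb',\mathrm{id}_{C'})\colon\delta'\to\delta'$. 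Everything then reduces to the sublemma: if $(\mathrm{id}_{A},u,\mathrm{id}_{C})\colon\delta\to\delta$ is a morphism of $\mathbb{E}$-triangles, then $u$ is an isomorphism. To prove it, $yu=y$ and the exactness of $\mathscr{C}(B,A)\to\mathscr{C}(B,B)\to\mathscr{C}(B,C)$ (Lemma \ref{lem2.0} at the object $B$) show $u-\mathrm{id}_{B}$ lies in the image of $\mathscr{C}(B,x)$, so $u=\mathrm{id}_{B}+xw$ for some $w\colon B\to A$; then $ux=x$ forces $xwx=0$, hence $(xw)^{2}=0$, so $u$ is invertible with inverse $\mathrm{id}_{B}-xw$. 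Applying the sublemma to $b'b$ and to $bb'$ shows both are isomorphisms, whence $b$ has a left inverse $(b'b)^{-1}b'$ and a right inverse $b'(bb')^{-1}$; these coincide, so $b$ is an isomorphism.

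The subtle point is precisely what separates (iii) from (i) and (ii): in a triangulated category the long exact sequences are unbounded in both directions and $b$ would sit in the middle of a five-term window, but the exact sequences of Lemma \ref{lem2.0} stop at the $\mathbb{E}$-terms, so no five lemma can pin down $b$ by a diagram chase and the sublemma really is needed. The remaining checks --- commutativity of the induced squares, and that composites of morphisms of $\mathbb{E}$-triangles are again morphisms of $\mathbb{E}$-triangles --- are routine and I would dispatch them quickly.
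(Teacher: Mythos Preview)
Your argument is correct in all three cases. The paper does not supply its own proof of this lemma---it is merely quoted from \cite[Corollary~3.6]{NP}---so there is nothing to compare against here; your approach (the five lemma on the exact sequences of Lemma~\ref{lem2.0} for cases (i) and (ii), and the nilpotence trick $u=\mathrm{id}_B+xw$ with $(xw)^2=0$ for case (iii)) is the standard one.
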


\begin{lem} \textup{(}\cite[Proposition 3.7]{NP}\textup{)} \label{lem 2.10}
	Let $(\mathscr{C},\mathbb{E},\mathfrak{s})$ be a pre-extriangulated category. Assume that $A\stackrel{x}{\longrightarrow}B\stackrel{y}{\longrightarrow}C\stackrel{\delta}{\dashrightarrow}$ is an $\mathbb{E}$-triangle.
	If $a\in \mathscr{C}(A,A')$ and $c\in\mathscr{C}(C',C)$ are isomorphisms, then
	$A'\stackrel{xa^{-1}}{\longrightarrow}B\stackrel{c^{-1}y}{\longrightarrow}C'\stackrel{a_*c^*\delta}{\dashrightarrow}$ is also an $\mathbb{E}$-triangle.
\end{lem}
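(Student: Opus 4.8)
The plan is to recognise the sequence $A'\overset{xa^{-1}}{\longrightarrow}B\overset{c^{-1}y}{\longrightarrow}C'$ as a representative of the equivalence class $\mathfrak{s}(a_*c^*\delta)$, using only the defining property of the realization $\mathfrak{s}$ together with Lemma \ref{lem2.09}; no octahedron-type axiom is needed. First I would note that $a_*c^*\delta$ is well defined ($c^*\delta\in\mathbb{E}(C',A)$ and then $a_*c^*\delta\in\mathbb{E}(C',A')$) and that, by bifunctoriality of $\mathbb{E}$ (ET1), the two composites agree, $a_*c^*\delta=c^*a_*\delta$, so the statement is symmetric in $a$ and $c$.

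Next I would check that $(a^{-1},c)$ is a morphism of $\mathbb{E}$-extensions from $a_*c^*\delta\in\mathbb{E}(C',A')$ to $\delta\in\mathbb{E}(C,A)$: the required identity $(a^{-1})_*(a_*c^*\delta)=c^*\delta$ holds because $(a^{-1})_*a_*=(a^{-1}a)_*=(\mathrm{id}_A)_*=\mathrm{id}_{\mathbb{E}(C',A)}$. Writing $\mathfrak{s}(a_*c^*\delta)=[A'\overset{x''}{\longrightarrow}B''\overset{y''}{\longrightarrow}C']$ and recalling $\mathfrak{s}(\delta)=[A\overset{x}{\longrightarrow}B\overset{y}{\longrightarrow}C]$, the realization axiom applied to $(a^{-1},c)$ supplies $b\in\mathscr{C}(B'',B)$ making the diagram with these two rows and vertical maps $a^{-1},b,c$ commute; thus $(a^{-1},b,c)$ is a morphism of $\mathbb{E}$-triangles. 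Since $a^{-1}$ and $c$ are isomorphisms, Lemma \ref{lem2.09} forces $b$ to be an isomorphism.

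Finally I would read off the equivalence. Commutativity of the realized square gives $bx''=xa^{-1}$ and $yb=cy''$, equivalently $bx''=xa^{-1}$ and $(c^{-1}y)b=y''$, so the isomorphism $b\colon B''\to B$ identifies the sequence $A'\overset{x''}{\longrightarrow}B''\overset{y''}{\longrightarrow}C'$ with $A'\overset{xa^{-1}}{\longrightarrow}B\overset{c^{-1}y}{\longrightarrow}C'$. Hence $\mathfrak{s}(a_*c^*\delta)=[A'\overset{xa^{-1}}{\longrightarrow}B\overset{c^{-1}y}{\longrightarrow}C']$, which is precisely the assertion that $A'\overset{xa^{-1}}{\longrightarrow}B\overset{c^{-1}y}{\longrightarrow}C'\overset{a_*c^*\delta}{\dashrightarrow}$ is an $\mathbb{E}$-triangle.

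I do not expect any genuine obstacle here: the only points needing care are the direction of the realization axiom (it produces $b\colon B''\to B$, which is exactly the direction in which an equivalence of sequences is defined, so one should resist the urge to invert it) and the functoriality bookkeeping $(a^{-1})_*a_*=\mathrm{id}$. If one prefers, the same conclusion follows in two stages — first passing from $\delta$ to $a_*\delta$ via the morphism $(a,\mathrm{id}_C)$, then from $a_*\delta$ to $c^*(a_*\delta)=a_*c^*\delta$ via $(\mathrm{id}_{A'},c)$ — each stage being the argument above with one of the two isomorphisms trivial.
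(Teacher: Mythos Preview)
Your argument is correct. Note, however, that the paper does not supply its own proof of this lemma: it is quoted verbatim from \cite[Proposition 3.7]{NP}, so there is nothing in the present paper to compare your approach against. Your proof is the standard one---realize $a_*c^*\delta$, lift the morphism $(a^{-1},c)$ of extensions to a triple $(a^{-1},b,c)$ via the realization axiom, then invoke Lemma~\ref{lem2.09} to see that $b$ is an isomorphism exhibiting the desired equivalence of sequences---and this is essentially how the result is proved in \cite{NP} as well.
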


\subsection{Shifted octahedrons}

Axiom ($\mathrm{ET4}$)  is an analogue of the octahedron axiom ($\mathrm{TR4}$) for triangulated categories. For later use, we collect the shifted octahedrons in extriangulated categories from \cite{NP,LN}.

\begin{lem} \textup{(}\cite[Proposition 3.15]{NP}\textup{)} \label{lem2.11}
	Let $(\mathscr{C},\mathbb{E},\mathfrak{s})$ be a pre-extriangulated category. If $\mathscr{C}$ satisfies $(\mathrm{ET4})$, then  $\mathscr{C}$ satisfies $(\mathrm{ET4}$-$1):$
	
	Let  	$A_{1}\stackrel{x_{1}}{\longrightarrow}B_{1}\stackrel{y_{1}}{\longrightarrow}C\stackrel{\delta_{1}}{\dashrightarrow}$ and
	$A_{2}\stackrel{x_{2}}{\longrightarrow}B_{2}\stackrel{y_{2}}{\longrightarrow}C\stackrel{\delta_{2}}{\dashrightarrow}$
	be  $\mathbb{E}$-triangles. Then there is a commutative diagram
	$$\xymatrix{
		&{A_2}\ar@{-->}[d]^{m_2}\ar@{=}[r]&{A_2}\ar[d]^{x_2}\\
		{A_1}\ar@{=}[d]\ar@{-->}[r]^{m_1}&M\ar@{-->}[r]^{e_1}\ar@{-->}[d]^{e_2}&{B_2}\ar[d]^{y_2}\ar@{-->}[r]^{y_{2}^{*}\delta_{1}}&\\
		{A_1}\ar[r]^{x_1}&{B_1}\ar@{-->}[d]^{y_{1}^{*}\delta_{2}}\ar[r]^{y_1}&C\ar@{-->}[r]^{\delta_{1}}\ar@{-->}[d]^{\delta_{2}}&\\
		&&&}$$
	whose second row and second column are $\mathbb{E}$-triangles such that $m_{1*}\delta_{1}+m_{2*}\delta_{2}=0$.
\end{lem}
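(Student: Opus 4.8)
The plan is to build the object $M$ by realizing one of the pulled-back extensions, to surround it by the two given $\mathbb{E}$-triangles using the realization axiom and the exact sequences of Lemma~\ref{lem2.0}, and then to invoke the octahedron axiom $(\mathrm{ET4})$ once in order to produce the remaining $\mathbb{E}$-triangle together with the compatibility of the extensions.

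First I would choose a realization $A_{2}\xrightarrow{m_{2}}M\xrightarrow{e_{2}}B_{1}$ of $y_{1}^{*}\delta_{2}\in\mathbb{E}(B_{1},A_{2})$; this already gives the second column of the diagram. Because $(\mathrm{id}_{A_{2}},y_{1})$ is a morphism of $\mathbb{E}$-extensions $y_{1}^{*}\delta_{2}\to\delta_{2}$, the realization axiom inside $(\mathrm{ET2})$ supplies $e_{1}\in\mathscr{C}(M,B_{2})$ with $e_{1}m_{2}=x_{2}$ and $y_{2}e_{1}=y_{1}e_{2}$, so the top two rows of the top-right square commute. Next, $x_{1}^{*}(y_{1}^{*}\delta_{2})=(y_{1}x_{1})^{*}\delta_{2}=0$, so evaluating the second exact sequence of Lemma~\ref{lem2.0} for $A_{2}\xrightarrow{m_{2}}M\xrightarrow{e_{2}}B_{1}$ at $A_{1}$ shows that $x_{1}$ factors through $e_{2}$; I fix $m_{1}\in\mathscr{C}(A_{1},M)$ with $e_{2}m_{1}=x_{1}$. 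Since $y_{2}(e_{1}m_{1})=y_{1}e_{2}m_{1}=y_{1}x_{1}=0$, another use of Lemma~\ref{lem2.0} lets me replace $m_{1}$ by $m_{1}-m_{2}w$ for a suitable $w\in\mathscr{C}(A_{1},A_{2})$ so that, in addition, $e_{1}m_{1}=0$. After these preparations only two things remain: that $A_{1}\xrightarrow{m_{1}}M\xrightarrow{e_{1}}B_{2}$ is an $\mathbb{E}$-triangle realizing $y_{2}^{*}\delta_{1}$, and that $m_{1*}\delta_{1}+m_{2*}\delta_{2}=0$.

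Both follow from one application of $(\mathrm{ET4})$. I would feed $(\mathrm{ET4})$ a composable pair of $\mathbb{E}$-triangles assembled from the second-column triangle, the triangle $\delta_{1}$, and, if necessary, one auxiliary realization, in such a way that the octahedron outputs an $\mathbb{E}$-triangle $A_{1}\to M'\to B_{2}$ realizing $y_{2}^{*}\delta_{1}$ together with a morphism of $\mathbb{E}$-triangles comparing $M'$ with $M$ whose two outer components are identities; Lemma~\ref{lem2.09} then forces the middle component $M'\to M$ to be an isomorphism, and transporting along it (equivalent conflations realizing the same extension, cf. Lemma~\ref{lem 2.10}) promotes $A_{1}\xrightarrow{m_{1}}M\xrightarrow{e_{1}}B_{2}$ to the desired $\mathbb{E}$-triangle. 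The relation $m_{1*}\delta_{1}+m_{2*}\delta_{2}=0$ is then read off from the two compatibility identities $d^{*}\delta''=\delta$ and $f_{*}\delta''=e^{*}\delta'$ returned by $(\mathrm{ET4})$, after passing through the exact sequences of Lemma~\ref{lem2.0}, the identities $x_{1*}\delta_{1}=0=x_{2*}\delta_{2}$ and $y_{1}^{*}\delta_{1}=0=y_{2}^{*}\delta_{2}$, and the additivity of $\mathbb{E}$.

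The step I expect to be the real obstacle is this application of $(\mathrm{ET4})$: one must engineer the composable pair so that the octahedron manufactures precisely the square around $M$ (in particular so that its newly created object is forced, via Lemma~\ref{lem2.09}, to be $M$), and then keep careful track of the operations $a_{*}$ and $c^{*}$ on $\mathbb{E}$-extensions so as to verify simultaneously the commutativity of the whole diagram and the relation $m_{1*}\delta_{1}+m_{2*}\delta_{2}=0$. All the earlier steps are routine applications of Lemma~\ref{lem2.0}, Lemma~\ref{lem 2.1}, Lemma~\ref{lem2.09}, Lemma~\ref{lem 2.10} and the realization axiom.
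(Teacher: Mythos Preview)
The paper does not reproduce a proof of this lemma; it simply cites \cite[Proposition~3.15]{NP}. So there is no ``paper's own proof'' to compare against here, and the question is whether your outline stands on its own.

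Your preliminary steps are fine: realizing $y_{1}^{*}\delta_{2}$ to obtain the column $A_{2}\xrightarrow{m_{2}}M\xrightarrow{e_{2}}B_{1}$, producing $e_{1}$ from the realization axiom, and lifting $x_{1}$ through $e_{2}$ via Lemma~\ref{lem2.0} are all routine and correct. However, the proposal has a genuine gap at exactly the point you flag as ``the real obstacle''. You write that you would ``feed $(\mathrm{ET4})$ a composable pair of $\mathbb{E}$-triangles assembled from the second-column triangle, the triangle $\delta_{1}$, and, if necessary, one auxiliary realization'', but you never say what that pair is. This is not a detail that can be filled in mechanically: $(\mathrm{ET4})$ requires two $\mathbb{E}$-triangles $A\to B\to D$ and $B\to C\to F$ sharing the object $B$ as \emph{middle of the first and start of the second}. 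Neither of your two available triangles, $A_{2}\to M\to B_{1}$ and $A_{1}\to B_{1}\to C$, can be placed in that configuration (the only shared object $B_{1}$ is the \emph{end} of the first and the \emph{middle} of the second), and the morphism $e_{1}\colon M\to B_{2}$ is not yet known to be an inflation, so you cannot use it as the start of a second input either. Thus the single sentence on which the whole argument hinges is, as written, not a step but a hope.

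A related issue is strategic: you construct $m_{1}$ by hand (lift through $e_{2}$, then adjust to kill $e_{1}m_{1}$) and then ask $(\mathrm{ET4})$ to certify that your chosen $m_{1}$ sits in an $\mathbb{E}$-triangle. The proof in \cite{NP} proceeds the other way around: one arranges an application of $(\mathrm{ET4})$ whose \emph{output} is an $\mathbb{E}$-triangle $A_{1}\to M'\to B_{2}$ together with the comparison to $M$, so that the correct $m_{1}$ and the identity $m_{1*}\delta_{1}+m_{2*}\delta_{2}=0$ are delivered simultaneously by the axiom rather than verified after the fact. Until you exhibit the actual composable pair (this is where an auxiliary realization, not just the two given triangles, is genuinely needed) and check the two compatibility identities it returns, the argument is incomplete.
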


\begin{lem}\textup{(}\cite[Proposition 1.20]{LN}\textup{)} \label{lem2.2}
	Let $(\mathscr{C},\mathbb{E},\mathfrak{s})$ be a pre-extriangulated category. If $\mathscr{C}$ satisfies $(\mathrm{ET4}$-$1)$, then  $\mathscr{C}$ satisfies  $(\mathrm{ET4}$-$2):$
	
	Let $A\stackrel{x}{\longrightarrow}B\stackrel{y}{\longrightarrow}C\stackrel{\delta}{\dashrightarrow}$ be an $\mathbb{E}$-triangle,  $f:A\rightarrow D$ be a morphism, and $D\stackrel{d}{\longrightarrow}E\stackrel{e}{\longrightarrow}C\stackrel{f_{*}\delta}{\dashrightarrow}$ be an $\mathbb{E}$-triangle realizing $f_{*}\delta$. Then there is a morphism $g:B\rightarrow E$ such that the following diagram
	$$\xymatrix{
		A \ar[d]_{f} \ar[r]^{x} & B  \ar[r]^{y}\ar@{-->}[d]_{g} & C \ar@{=}[d] \ar@{-->}[r]^{\delta} &  \\
		D\ar[r]^{d} &E \ar[r]^{e} & C \ar@{-->}[r]^{f_{*}\delta} &    }
	$$
	is a morphism of $\mathbb{E}$-triangles such that $ A\stackrel{\left(\begin{smallmatrix}
		-f\\ x
		\end{smallmatrix}\right)}{\longrightarrow}D\oplus B\stackrel{\left(\begin{smallmatrix}
		d, g
		\end{smallmatrix}\right)}{\longrightarrow}E\stackrel{
		e^{*}\delta}{\dashrightarrow} $ is an $\mathbb{E}$-triangle.
\end{lem}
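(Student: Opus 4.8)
The plan is to run the octahedron $(\mathrm{ET4}$-$1)$ on the two $\mathbb{E}$-triangles that already share the third term $C$, namely $A\stackrel{x}{\longrightarrow}B\stackrel{y}{\longrightarrow}C\stackrel{\delta}{\dashrightarrow}$ (in the role of $\delta_1$) and $D\stackrel{d}{\longrightarrow}E\stackrel{e}{\longrightarrow}C\stackrel{f_*\delta}{\dashrightarrow}$ (in the role of $\delta_2$), and then to recognize the resulting data as the direct-sum $\mathbb{E}$-triangle demanded in $(\mathrm{ET4}$-$2)$. As a preliminary I would record that $y^*\delta=0$: the composite $\mathscr{C}(B,B)\to\mathscr{C}(B,C)\to\mathbb{E}(B,A)$ coming from Lemma~\ref{lem2.0} applied to $A\stackrel{x}{\longrightarrow}B\stackrel{y}{\longrightarrow}C\stackrel{\delta}{\dashrightarrow}$ (its second map being $c\mapsto c^*\delta$) is zero, and it carries $1_B\mapsto y\mapsto y^*\delta$. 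Since $\mathbb{E}$ is a bifunctor it follows that $y^*(f_*\delta)=f_*(y^*\delta)=0$.

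Applying $(\mathrm{ET4}$-$1)$ to the chosen pair produces an object $M$ and morphisms $m_1\colon A\to M$, $m_2\colon D\to M$, $e_1\colon M\to E$, $e_2\colon M\to B$ with $e_2m_1=x$, $e_1m_2=d$ and $ee_1=ye_2$, such that $A\stackrel{m_1}{\longrightarrow}M\stackrel{e_1}{\longrightarrow}E\stackrel{e^*\delta}{\dashrightarrow}$ and $D\stackrel{m_2}{\longrightarrow}M\stackrel{e_2}{\longrightarrow}B\stackrel{y^*(f_*\delta)}{\dashrightarrow}$ are $\mathbb{E}$-triangles and $m_{1*}\delta+m_{2*}(f_*\delta)=0$. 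Since $y^*(f_*\delta)=0$, the second column is a split $\mathbb{E}$-triangle, so $\mathfrak{s}(0)=0$ (additivity of $\mathfrak{s}$) gives an isomorphism $\phi\colon M\to D\oplus B$ with $\phi m_2=\left(\begin{smallmatrix}1\\0\end{smallmatrix}\right)$ and $\left(\begin{smallmatrix}0,1\end{smallmatrix}\right)\phi=e_2$. On the other hand, biadditivity of $\mathbb{E}$ rewrites $m_{1*}\delta+m_{2*}(f_*\delta)=0$ as $(m_1+m_2f)_*\delta=0$, so the exactness of $\mathscr{C}(B,M)\to\mathscr{C}(A,M)\to\mathbb{E}(C,M)$ from Lemma~\ref{lem2.0} (second map $g\mapsto g_*\delta$) yields $h\colon B\to M$ with $m_1=hx-m_2f$. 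Writing $\phi h=\left(\begin{smallmatrix}h_D\\ h_B\end{smallmatrix}\right)$ with $h_D\colon B\to D$, a short computation then gives $\phi m_1=\left(\begin{smallmatrix}h_Dx-f\\ x\end{smallmatrix}\right)$, the lower entry being $e_2m_1=x$.

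To conclude, I would transport the $\mathbb{E}$-triangle $A\stackrel{m_1}{\longrightarrow}M\stackrel{e_1}{\longrightarrow}E\stackrel{e^*\delta}{\dashrightarrow}$ first across $\phi$ and then across the unipotent automorphism $\left(\begin{smallmatrix}1&-h_D\\ 0&1\end{smallmatrix}\right)$ of $D\oplus B$; as these are isomorphisms the equivalence class is unchanged, and the first map becomes $\left(\begin{smallmatrix}1&-h_D\\ 0&1\end{smallmatrix}\right)\left(\begin{smallmatrix}h_Dx-f\\ x\end{smallmatrix}\right)=\left(\begin{smallmatrix}-f\\ x\end{smallmatrix}\right)$. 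The $D$-component of the resulting second map (its precomposition with $\left(\begin{smallmatrix}1\\0\end{smallmatrix}\right)$) equals $e_1\phi^{-1}\left(\begin{smallmatrix}1\\0\end{smallmatrix}\right)=e_1m_2=d$, so that map has the form $\left(\begin{smallmatrix}d,g\end{smallmatrix}\right)$ for some $g\colon B\to E$, and one obtains an $\mathbb{E}$-triangle $A\stackrel{\left(\begin{smallmatrix}-f\\ x\end{smallmatrix}\right)}{\longrightarrow}D\oplus B\stackrel{\left(\begin{smallmatrix}d,g\end{smallmatrix}\right)}{\longrightarrow}E\stackrel{e^*\delta}{\dashrightarrow}$. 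It then remains to check that $(f,g,1_C)$ is a morphism of $\mathbb{E}$-triangles: composing the two maps of this $\mathbb{E}$-triangle gives $-df+gx=0$, i.e.\ $gx=df$, while $eg=y$ follows from $ee_1=ye_2$ together with $e_2\phi^{-1}=\left(\begin{smallmatrix}0,1\end{smallmatrix}\right)$ by a one-line matrix computation. This $g$ satisfies all the required conditions.

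I expect the main obstacle to be exactly this last bookkeeping. The octahedron $(\mathrm{ET4}$-$1)$ only hands over an abstract $M$ together with the numerical relation $m_{1*}\delta+m_{2*}(f_*\delta)=0$, whereas $(\mathrm{ET4}$-$2)$ asks for the precise sequence $A\to D\oplus B\to E$ together with one morphism $g$ that simultaneously fills the commutative square and yields a realization of $e^*\delta$. The delicate point is that $\phi m_1$ comes out as $\left(\begin{smallmatrix}h_Dx-f\\ x\end{smallmatrix}\right)$ rather than $\left(\begin{smallmatrix}-f\\ x\end{smallmatrix}\right)$, so the spurious term $h_Dx$ must be absorbed by a unipotent automorphism of $D\oplus B$ before $g$ can be read off; only after this normalization does the induced $g$ meet both halves of the conclusion.
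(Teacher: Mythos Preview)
The paper does not give its own proof of this lemma; it is simply quoted from \cite[Proposition~1.20]{LN}. Your argument is correct and follows what is essentially the standard route: apply $(\mathrm{ET4}\text{-}1)$ to the pair $(\delta,f_*\delta)$, use $y^*(f_*\delta)=f_*(y^*\delta)=0$ to see that the resulting column $D\stackrel{m_2}{\longrightarrow}M\stackrel{e_2}{\longrightarrow}B$ realizes the split extension so that $M\cong D\oplus B$, then normalize $\phi m_1$ to $\left(\begin{smallmatrix}-f\\ x\end{smallmatrix}\right)$ by a unipotent automorphism of $D\oplus B$ and read off $g$ from the second map. The bookkeeping you flag---absorbing the spurious term $h_Dx$, and the verifications $gx=df$ (from $(d,g)\left(\begin{smallmatrix}-f\\ x\end{smallmatrix}\right)=0$) and $eg=y$ (from $ee_1=ye_2$ together with $e_2\phi^{-1}=(0,1)$)---is carried out correctly.
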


\begin{lem} \textup{(}\cite[Lemma 3.14]{NP}\textup{)}\label{lem2.12}
	Let $(\mathscr{C},\mathbb{E},\mathfrak{s})$ be a pre-extriangulated category. If $\mathscr{C}$ satisfies $(\mathrm{ET4})$, then $\mathscr{C}$ satisfies $(\mathrm{ET4}$-$3):$
	
	Let
	$A\stackrel{f}{\longrightarrow}B\stackrel{f'}{\longrightarrow}D\stackrel{\delta_{f}}{\dashrightarrow}$,
	$B\stackrel{g}{\longrightarrow}C\stackrel{g'}{\longrightarrow}F\stackrel{\delta_{g}}{\dashrightarrow}$ and
	$A\stackrel{h}{\longrightarrow}C\stackrel{h_{0}}{\longrightarrow}E_{0}\stackrel{\delta_{h}}{\dashrightarrow}$
	be  $\mathbb{E}$-triangles satisfying $h=gf$. Then there are morphisms $d_{0}: D\rightarrow E_0$ and
	$e_{0}: E_0\rightarrow F$  such that the following
	$$\xymatrix{
		A\ar@{=}[d] \ar[r]^{f} &B\ar[r]^{f'}\ar[d]^{g}  & D\ar@{-->}[d]^{d_0}\ar@{-->}[r]^{\delta_{f}}&\\
		A \ar[r]^{h} & C\ar[r]^{h_0}\ar[d]^{g'}  & E_0 \ar@{-->}[d]^{e_0} \ar@{-->}[r]^{\delta_{h}}&\\
		&F\ar@{=}[r] \ar@{-->}[d]^{\delta_{g}}&F\ar@{-->}[d]^{f'_{*}(\delta_{g})}&\\
		&&&}$$
	is a commutative diagram whose third column is an $\mathbb{E}$-triangle, moreover, $d_0^*(\delta_h)=\delta_f$ and $f_*\delta_h=e_0^*(\delta_g)$.
\end{lem}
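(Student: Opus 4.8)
The plan is to reduce (ET4-3) to (ET4) together with a change of realization of the ``middle'' triangle. Since (ET4) manufactures \emph{one} particular $\mathbb{E}$-triangle over the composite $h=gf$, while (ET4-3) demands compatibility with the \emph{prescribed} realization $A\stackrel{h}{\longrightarrow}C\stackrel{h_{0}}{\longrightarrow}E_{0}\stackrel{\delta_{h}}{\dashrightarrow}$, the whole point is to identify the two via an isomorphism.

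First I would apply (ET4) to $A\stackrel{f}{\longrightarrow}B\stackrel{f'}{\longrightarrow}D\stackrel{\delta_{f}}{\dashrightarrow}$ and $B\stackrel{g}{\longrightarrow}C\stackrel{g'}{\longrightarrow}F\stackrel{\delta_{g}}{\dashrightarrow}$. This yields an $\mathbb{E}$-triangle $A\stackrel{h}{\longrightarrow}C\stackrel{h'}{\longrightarrow}E\stackrel{\delta''}{\dashrightarrow}$ (commutativity of the top-left square forces the first arrow to be $gf=h$, matching the hypothesis), morphisms $d\colon D\to E$, $e\colon E\to F$ with $df'=h'g$ and $eh'=g'$, an $\mathbb{E}$-triangle $D\stackrel{d}{\longrightarrow}E\stackrel{e}{\longrightarrow}F\stackrel{f'_{*}\delta_{g}}{\dashrightarrow}$, and the identities $d^{*}\delta''=\delta_{f}$ and $f_{*}\delta''=e^{*}\delta_{g}$. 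Now both $A\stackrel{h}{\longrightarrow}C\stackrel{h'}{\longrightarrow}E\stackrel{\delta''}{\dashrightarrow}$ and the given $A\stackrel{h}{\longrightarrow}C\stackrel{h_{0}}{\longrightarrow}E_{0}\stackrel{\delta_{h}}{\dashrightarrow}$ are $\mathbb{E}$-triangles sharing the inflation $h$, so applying (ET3) to the commutative square with vertical maps $1_{A}$ and $1_{C}$ produces a morphism of $\mathbb{E}$-triangles $(1_{A},1_{C},c)$ from the first to the second; thus $ch'=h_{0}$ and $\delta''=c^{*}\delta_{h}$. Since $1_{A}$ and $1_{C}$ are isomorphisms, Lemma~\ref{lem2.09} forces $c\colon E\to E_{0}$ to be an isomorphism, and therefore $(c^{-1})^{*}\delta''=\delta_{h}$.

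Finally, I would set $d_{0}:=cd$ and $e_{0}:=ec^{-1}$. Then $d_{0}f'=cdf'=ch'g=h_{0}g$ and $e_{0}h_{0}=ec^{-1}h_{0}=ec^{-1}ch'=eh'=g'$, which gives all the required commutativities in the (ET4-3) diagram. The isomorphism $c$ fits into a commutative diagram
$$\xymatrix{ D\ar@{=}[d]\ar[r]^{d} & E\ar[d]^{c}_{\simeq}\ar[r]^{e} & F\ar@{=}[d]\\ D\ar[r]^{d_{0}} & E_{0}\ar[r]^{e_{0}} & F }$$
so $D\stackrel{d_{0}}{\longrightarrow}E_{0}\stackrel{e_{0}}{\longrightarrow}F$ is equivalent to $D\stackrel{d}{\longrightarrow}E\stackrel{e}{\longrightarrow}F$ and hence realizes $f'_{*}\delta_{g}$; thus the third column $D\stackrel{d_{0}}{\longrightarrow}E_{0}\stackrel{e_{0}}{\longrightarrow}F\stackrel{f'_{*}\delta_{g}}{\dashrightarrow}$ is an $\mathbb{E}$-triangle. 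For the extension identities, $d_{0}^{*}\delta_{h}=d^{*}c^{*}\delta_{h}=d^{*}\delta''=\delta_{f}$, while $e_{0}^{*}\delta_{g}=(c^{-1})^{*}e^{*}\delta_{g}=(c^{-1})^{*}f_{*}\delta''=f_{*}(c^{-1})^{*}\delta''=f_{*}\delta_{h}$, using that $(-)_{*}$ and $(-)^{*}$ commute. This completes the verification.

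I do not expect a serious obstacle here: once one recognizes that (ET4-3) is just (ET4) followed by a change of realization of the middle triangle, everything else is formal. The only point needing care is keeping track of the directions of functoriality — which of $c$, $c^{-1}$, and which of $(-)_{*}$, $(-)^{*}$ occur — when checking $d_{0}^{*}\delta_{h}=\delta_{f}$ and $f_{*}\delta_{h}=e_{0}^{*}\delta_{g}$.
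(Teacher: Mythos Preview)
Your argument is correct: apply (ET4) to obtain the auxiliary $\mathbb{E}$-triangle $A\stackrel{h}{\longrightarrow}C\stackrel{h'}{\longrightarrow}E\stackrel{\delta''}{\dashrightarrow}$ together with $d,e$, then use (ET3) and Lemma~\ref{lem2.09} to produce an isomorphism $c\colon E\xrightarrow{\sim}E_{0}$ identifying this with the prescribed triangle, and transport $d,e$ along $c$. All the bookkeeping with $(-)_{*}$ and $(-)^{*}$ is accurate, including the use of bifunctoriality for $(c^{-1})^{*}f_{*}=f_{*}(c^{-1})^{*}$.

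Note that the paper itself does not give a proof of this lemma; it simply quotes it from \cite[Lemma~3.14]{NP}. The argument you wrote is precisely the standard one used there (and the only natural one): (ET4-3) is nothing but (ET4) up to a change of representative of the middle row, so there is no genuinely different route to compare against.
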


\begin{lem}\textup{(}\cite[Proposition 3.17]{NP}\textup{)} \label{lem2.3}
	Let $(\mathscr{C},\mathbb{E},\mathfrak{s})$ be an extriangulated category. Then $\mathscr{C}$ satisfies $(\mathrm{ET4}$-$4):$
	
	Let $D\stackrel{f}{\longrightarrow}A\stackrel{f'}{\longrightarrow}C\stackrel{\delta_{f}}{\dashrightarrow},
	A\stackrel{g}{\longrightarrow}B\stackrel{g'}{\longrightarrow}F\stackrel{\delta_{g}}{\dashrightarrow}$ and
	$E\stackrel{h}{\longrightarrow}B\stackrel{h'}{\longrightarrow}C\stackrel{\delta_{h}}{\dashrightarrow}$ be $\mathbb{E}$-triangles satisfying $ h'g=f'$. Then there is an $\mathbb{E}$-triangle $D\stackrel{d}{\longrightarrow}E\stackrel{e}{\longrightarrow}F\stackrel{\theta }{\dashrightarrow} $ such that  the following diagram
	$$\xymatrix{
		D \ar@{-->}[d]^{d} \ar[r]^{f} & A\ar[r]^{f'}\ar[d]^{g}  & C \ar@{=}[d]\ar@{-->}[r]^{\delta_{f}} &\\
		E \ar@{-->}[d]^{e} \ar[r]^{h} & B\ar[r]^{h'}\ar[d]^{g'} & C \ar@{-->}[r]^{\delta_{h}} &\\
		F\ar@{=}[r]\ar@{-->}[d]^{\theta} & F\ar@{-->}[d]^{\delta_{g}}	 &   \\
		&&&}$$	
	is commutative and satisfies the following three conditions.
	
	$(1)$ $d_{* }(\delta_{f})=\delta_{h}$,
	
	$(2)$ $f_{*}(\theta)=\delta_{g}$,
	
	$(3)$ $g'^{*}(\theta)+h'^{*} (\delta_{f})=0$.
\end{lem}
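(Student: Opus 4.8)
The plan is to extract $d$, then $e$ and $\theta$, in turn, invoking the octahedra only at the last step. For $d$ and condition $(1)$: since $h'g=f'=1_C\circ f'$, axiom $(\mathrm{ET3})^{\mathrm{op}}$ applied to the diagram whose rows are the $\mathbb E$-triangles $D\xrightarrow{f}A\xrightarrow{f'}C\dashrightarrow\delta_f$ and $E\xrightarrow{h}B\xrightarrow{h'}C\dashrightarrow\delta_h$ and whose right-hand vertical maps are $g$ and $1_C$ produces a morphism $d\colon D\to E$ with $hd=gf$ and $d_*\delta_f=1_C^{*}\delta_h=\delta_h$. This is condition $(1)$, and it makes the top rectangle of the required diagram commute.

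For the $\mathbb E$-triangle $D\xrightarrow{d}E\xrightarrow{e}F\dashrightarrow\theta$ and conditions $(2)$, $(3)$, I would feed a configuration built from the given data into the octahedron axioms. First apply $(\mathrm{ET4})$ to the composable inflations $f$ and $g$: this shows in particular that $gf$ is an inflation, and produces an object $\bar E$ together with $\mathbb E$-triangles $D\xrightarrow{gf}B\to\bar E$ and $C\to\bar E\to F$ (the latter realizing $f'_*\delta_g$) and the accompanying naturality identities. Then, using $gf=hd$ together with the $\mathbb E$-triangle $E\xrightarrow{h}B\xrightarrow{h'}C\dashrightarrow\delta_h$, apply $(\mathrm{ET4})^{\mathrm{op}}$ (and, where convenient, Lemmas~\ref{lem2.12} and \ref{lem2.11}) to the resulting configuration. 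The key point is that the auxiliary $\mathbb E$-extensions that arise are split, since they are pulled back or pushed out along composites containing $h'h=0$ or $g'g=0$; hence Lemma~\ref{lem2.09} and Lemma~\ref{lem 2.10} identify the auxiliary cones with the \emph{prescribed} objects $E$ and $F$, compatibly with $h,h',g,g'$. The octahedron then delivers $e$ (necessarily $e=g'h$), an $\mathbb E$-triangle $D\xrightarrow{d}E\xrightarrow{e}F\dashrightarrow\theta$, the commutativity of the full diagram, and the identities $f_*\theta=\delta_g$ and $g'^{*}\theta+h'^{*}\delta_f=0$, i.e.\ $(2)$ and $(3)$. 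As consistency checks: $f'_*\delta_g=(h'g)_*\delta_g=h'_*(g_*\delta_g)=0$ by Lemma~\ref{lem2.0} (using $g_*\delta_g=0$ there), which is precisely what makes a lift $\theta$ of $\delta_g$ along $f$ possible; and $g^{*}(g'^{*}\theta+h'^{*}\delta_f)=0+(h'g)^{*}\delta_f=f'^{*}\delta_f=0$, with the analogous vanishing under $h^{*}$ following from $e^{*}\theta=0$ and $h'h=0$.

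The main obstacle is the identification step just described: one has to arrange the octahedra (and their op-versions) so that the auxiliary cones genuinely collapse onto the prescribed $E$ and $F$ rather than onto abstract third objects — equivalently, so that the $d$ of the first step is recognised as an inflation with cokernel $F$. This is exactly where the hypothesis that $\mathscr C$ is extriangulated, hence has the ``kernel-type'' axiom $(\mathrm{ET4})^{\mathrm{op}}$ at its disposal, is used in an essential way; it cannot be weakened to ``$(\mathscr C,\mathbb E,\mathfrak s)$ satisfies $(\mathrm{ET4})$'' alone. A secondary, purely computational point of care is keeping the sign in $(3)$ consistent through all the identifications and through the final choice of representative for $\theta$.
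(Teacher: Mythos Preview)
The paper does not prove this lemma; it is quoted from \cite[Proposition~3.17]{NP}, and the only in-paper comment is the remark immediately following it that the proof in \cite{NP} uses both $(\mathrm{ET4})$ and $(\mathrm{ET4})^{\mathrm{op}}$. So there is no detailed argument here to compare against, only that one-line hint.

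Your high-level strategy is consistent with that hint: you apply $(\mathrm{ET4})$ to the composable inflations $f,g$ and then appeal to $(\mathrm{ET4})^{\mathrm{op}}$. But what you have written is a plan, not a proof, and the gap sits exactly where you flag it. Two concrete issues:

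\emph{(i) Order of construction.} You first manufacture $d$ via $(\mathrm{ET3})^{\mathrm{op}}$ and only afterwards try to show that this particular $d$ is an inflation with cone $F$. But $(\mathrm{ET3})^{\mathrm{op}}$ does not determine $d$ uniquely, and nothing you wrote forces the chosen $d$ to sit in an $\mathbb E$-triangle. In the actual argument the map $d$ is produced \emph{by} the octahedra, so it is an inflation by construction; separating the two steps creates an obstacle that need not be there.

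\emph{(ii) The ``splitness'' heuristic.} Your claim that the auxiliary extensions are split ``since they are pulled back or pushed out along composites containing $h'h=0$ or $g'g=0$'' is not substantiated. After $(\mathrm{ET4})$ on $f,g$ you obtain a cone $\bar E$ of $gf$ together with a triangle $C\to\bar E\to F$; there is no evident split extension identifying $\bar E$ with $E$ or $F$. What is actually needed is a second octahedron (on the $(\mathrm{ET4})^{\mathrm{op}}$ side) relating $D\xrightarrow{gf}B\to\bar E$ to the given $E\xrightarrow{h}B\xrightarrow{h'}C$, and you have not specified which deflations you feed into it or how the resulting diagram is matched with the prescribed $E,F,h,h',g,g'$. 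Invoking Lemmas~\ref{lem2.09} and~\ref{lem 2.10} only helps once you already have a morphism of $\mathbb E$-triangles with two isomorphic components, and that is precisely the missing step.

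In short: the outline agrees with the paper's remark, but the identification step---which you yourself call ``the main obstacle''---is left undone, and the splitness justification offered for it does not go through as stated.
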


We remark that the proof of Lemma \ref{lem2.3} uses both (ET4) and (ET4)$^{\text{op}}$.

\section{Main results}
\paragraph{} In this section we introduce the definition of homotopy cartesian squares in pre-extriangulated categories and  provide some equivalent statements of axiom (ET4). %We leave the dual statements to the reader.
\begin{defn}
	The following commutative square
	$$\xymatrix{
		A \ar[d]^{f}\ar[r]^{x} &B \ar[d]^{g} \\
		C \ar[r]^{y} & D }$$		
	in a pre-extriangulated category $(\mathscr{C},\mathbb{E},\mathfrak{s})$ is called  $\mathit{homotopy }$ $\mathit{ cartesian }$ if the sequence  $ A\stackrel{\left(\begin{smallmatrix}
		-f\\	x
		\end{smallmatrix}\right)}{\longrightarrow}C\oplus B\stackrel{\left(\begin{smallmatrix}
		y, g
		\end{smallmatrix}\right)}{\longrightarrow}D\stackrel{
		\partial}{\dashrightarrow} $ is an $\mathbb{E}$-triangle, where $\partial$ is called a $\mathit{differential}$.
\end{defn}

\begin{rem}
	We restate  $(\mathrm{ET4}$-$2)$ in the language of homotopy cartesian squares:
	
	Let $ A\stackrel{x}{\longrightarrow}B\stackrel{y}{\longrightarrow}C\stackrel{\delta}{\dashrightarrow} $ be an  $\mathbb{E}$-triangle, $f:A\rightarrow D $ be a morphism, and  $D\stackrel{d}{\longrightarrow}E\stackrel{e}{\longrightarrow}C\stackrel{f_{*}\delta}{\dashrightarrow}$ be an $\mathbb{E}$-triangle realizing $f_{*}\delta$. Then there is a morphism $g: B\rightarrow E$ which gives a morphism of $\mathbb{E}$-triangles
	$$\xymatrix{
		A \ar[d]_{f} \ar[r]^{x} & B  \ar[r]^{y}\ar@{-->}[d]_{g} & C \ar@{=}[d] \ar@{-->}[r]^{\delta} &  \\
		D\ar[r]^{d} &E \ar[r]^{e} & C \ar@{-->}[r]^{f_{*}\delta} &    }
	$$
	and moreover, the leftmost square is homotopy catesian where $e^{*}\delta$ is the differential.
\end{rem}

\begin{thm}\label{thm1}
	Let $(\mathscr{C},\mathbb{E},\mathfrak{s})$ be a pre-extriangulated category. Then $\mathscr{C}$ satisfies $(\mathrm{ET4}$-$2)$ if and only if  $\mathscr{C}$ satisfies $(\mathrm{ET4}$-$5):$
	
	Let $ A_1\stackrel{x_{1}}{\longrightarrow}B_1\stackrel{y_{1}}{\longrightarrow}C\stackrel{\delta}{\dashrightarrow}$  and
	$ A_2\stackrel{x_{2}}{\longrightarrow}B_2\stackrel{y_{2}}{\longrightarrow}C\stackrel{\delta'}{\dashrightarrow} $
	be  $\mathbb{E}$-triangles, and $ g:B_1\rightarrow B_2 $ be a morphism such that $ y_{2} g=y_{1}$. Then there is a morphism $f: A_1\rightarrow A_2$ which gives a morphism of $\mathbb{E}$-triangles
	$$\xymatrix{
		A_1 \ar[r]^{x_1} \ar@{-->}[d]_{f}  & B_1  \ar[r]^{y_1}\ar[d]_{g} & C \ar@{=}[d] \ar@{-->}[r]^{\delta} &  \\
		A_2 \ar[r]^{x_2} &B_2 \ar[r]^{y_2} & C \ar@{-->}[r]^{\delta'} &    }
	$$
	and moreover, the leftmost square is homotopy cartesian where $y_2^{*}\delta$ is the differential.
\end{thm}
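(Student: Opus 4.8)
The plan is to prove the equivalence (ET4-2) $\Leftrightarrow$ (ET4-5) by showing that each is obtained from the other by a suitable ``dualization of data'' trick, keeping the roles of inflations and deflations straight. Both statements are of the same shape: given two $\mathbb{E}$-triangles ending (resp. beginning) at a common object together with a compatibility morphism, one produces a completion whose outer square is homotopy cartesian. So I expect the two implications to be near-mirror images of one another, and I would write them in parallel.

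\textbf{From (ET4-2) to (ET4-5).} Start with the data of (ET4-5): $\mathbb{E}$-triangles $A_1\xrightarrow{x_1}B_1\xrightarrow{y_1}C\xdashrightarrow{\delta}$ and $A_2\xrightarrow{x_2}B_2\xrightarrow{y_2}C\xdashrightarrow{\delta'}$, plus $g\colon B_1\to B_2$ with $y_2g=y_1$. The first step is to feed (ET4-2) the $\mathbb{E}$-triangle $A_2\xrightarrow{x_2}B_2\xrightarrow{y_2}C\xdashrightarrow{\delta'}$ — but I need a morphism out of $A_2$ and an $\mathbb{E}$-triangle realizing its pushout of $\delta'$. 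Since $y_2g=y_1$, Lemma \ref{lem2.0} applied to the second $\mathbb{E}$-triangle gives that $g^{*}\delta'$ is hit from $\mathscr{C}(B_1,A_2)$; more usefully, the pair $(\mathrm{id}_{B_1},\ldots)$ interacts with $\delta$. The cleaner route: observe that $(\mathrm{id}_{A?},\mathrm{id}_C)$ is not yet a morphism of $\mathbb{E}$-triangles, so instead I would invoke (ET3)$^{\mathrm{op}}$-type completion, or better, directly apply (ET4-2) to the $\mathbb{E}$-triangle $A_1\xrightarrow{x_1}B_1\xrightarrow{y_1}C\xdashrightarrow{\delta}$ with the morphism $g$ playing a role only after we know $\delta' $ realizes some pushout. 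Concretely: by Lemma \ref{lem 2.1} and $y_2g=y_1$, the morphism $g$ fits into a morphism of $\mathbb{E}$-triangles $(?,g,\mathrm{id}_C)\colon\delta\to\delta'$ precisely when $\delta=g^{*}\delta'$ up to the left map; so first I would check, using exactness of $\mathscr{C}(-,A_2)\to\mathscr{C}(-,B_2)\to\mathscr{C}(-,C)\to\mathbb{E}(-,A_2)$ applied at $C$, that $y_1$ and $g$ force $\delta = (x_2)_* (\text{something})$... — this bookkeeping is where care is needed. Once I have $\delta'$ expressed as $f_*\delta$ for the sought $f$ is circular, so the honest move is: apply (ET3)$^{\mathrm{op}}$ to get a morphism $a\colon A_1\to A_2$ with $(a,g,\mathrm{id}_C)$ a morphism of $\mathbb{E}$-triangles, hence $a_*\delta = \mathrm{id}_C^{*}\delta' = \delta'$; then apply (ET4-2) to $A_1\xrightarrow{x_1}B_1\xrightarrow{y_1}C\xdashrightarrow{\delta}$ with $f:=a$ and the $\mathbb{E}$-triangle $A_2\xrightarrow{x_2}B_2\xrightarrow{y_2}C\xdashrightarrow{a_*\delta=\delta'}$, obtaining $g'\colon B_1\to B_2$ making the square homotopy cartesian with differential $e^{*}\delta = y_2^{*}\delta$. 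The last step is to replace $g'$ by $g$: both $g,g'$ induce morphisms $\delta\to\delta'$ over $(a,\mathrm{id}_C)$, so $g-g'$ kills $y_1$ hence factors through $y_1$... no — $g-g'$ composed with $y_2$ is $0$, so $g-g'$ factors through $x_2$, say $g-g'=x_2 s$ with $s\colon B_1\to A_2$; adjusting $a$ to $a+sx_1$ makes $(a+sx_1, g, \mathrm{id}_C)$ a morphism of $\mathbb{E}$-triangles and one checks the square with $g$ is still homotopy cartesian (the differential is unchanged since $e^{*}$ only sees $y_2$).

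\textbf{From (ET4-5) to (ET4-2).} This is the genuine mirror: given the (ET4-2) data $A\xrightarrow{x}B\xrightarrow{y}C\xdashrightarrow{\delta}$, $f\colon A\to D$, and $D\xrightarrow{d}E\xrightarrow{e}C\xdashrightarrow{f_*\delta}$, by (ET3) there is $b\colon B\to E$ with $(f,b,\mathrm{id}_C)$ a morphism of $\mathbb{E}$-triangles; then apply (ET4-5) with $A_1=A$, $B_1=B$, $A_2=D$, $B_2=E$, $g=b$ (so $e\circ b=y$ holds), producing a completion morphism $f'\colon A\to D$ and a homotopy cartesian left square with differential $e^{*}\delta$. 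Adjust $f'$ to equal the prescribed $f$ by the same ``difference factors through $x$'' argument, using Lemma \ref{lem 2.1}, and read off that $A\xrightarrow{\left(\begin{smallmatrix}-f\\x\end{smallmatrix}\right)}D\oplus B\xrightarrow{(d,\,g)}E\xdashrightarrow{e^{*}\delta}$ is the required $\mathbb{E}$-triangle — which is exactly the homotopy cartesian condition.

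\textbf{The main obstacle} is the ``adjustment'' step in both directions: (ET4-2) and (ET4-5) each assert existence of a specific completing morphism ($g$, resp.\ $f$), but the natural construction only yields \emph{some} such morphism, and one must modify either it or the other leg of the square by a correction term factoring through an inflation/deflation, then verify the homotopy cartesian square survives the modification. Here Lemma \ref{lem 2.1} (the factorization criterion $a$ factors through $x$ $\Leftrightarrow$ $a_*\delta = 0$ $\Leftrightarrow$ $c$ factors through $y'$) and Lemma \ref{lem 2.10} (isomorphic change of the ends of an $\mathbb{E}$-triangle) are the key tools; checking that the differential stays $y_2^{*}\delta$ (resp.\ $e^{*}\delta$) after the correction is a short computation using biadditivity of $\mathbb{E}$ and the fact that a correction $x_2 s$ contributes $0$ under the relevant pullback. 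I do not expect any difficulty from the homotopy cartesian reformulation itself — by the Remark it is literally a restatement of the conflation clause of (ET4-2), so the content of the theorem is exactly the transfer of data between the ``$f$ is given'' and ``$g$ is given'' formulations.
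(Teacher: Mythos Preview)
Your proposal is correct and follows essentially the same route as the paper's proof: in each direction you first complete the partial morphism of $\mathbb{E}$-triangles (via (ET3)$^{\mathrm{op}}$, resp.\ the realization of the morphism $(f,\mathrm{id}_C)$ of extensions), apply the hypothesis to obtain a homotopy cartesian square with a \emph{possibly different} completing morphism, and then correct by a term factoring through the inflation (using Lemma~\ref{lem 2.1} and the triangular automorphism $\left(\begin{smallmatrix}1 & -m\\ 0 & 1\end{smallmatrix}\right)$ of the direct sum) to recover the prescribed morphism while preserving the differential. One small clarification: in the direction (ET4-5)$\Rightarrow$(ET4-2) your phrase ``adjust $f'$ to equal the prescribed $f$'' should be read as adjusting $b$ to a new $g=b+dm$ (where $f-f'=mx$) so that the pair $(f,g)$, not $(f',b)$, sits in the homotopy cartesian square---this is exactly what the paper does, and is the genuine dual of your adjustment $a\mapsto a+sx_1$ in the other direction.
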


\begin{proof}
	($ \mathrm{ET4} $-$ 2 )  \Rightarrow  (\mathrm{ET4} $-$ 5 $).
	By $ (\mathrm{ET3})^{\textup{op}} $, there exists a morphism  $ f_1: A_{1}\rightarrow A_{2} $ which gives a morphism of $\mathbb{E}$-triangles
	$$\xymatrix{
		A_1 \ar@{-->}[d]_{f_1} \ar[r]^{x_1} & B_1  \ar[r]^{y_1}\ar[d]_{g} & C \ar@{=}[d] \ar@{-->}[r]^{\delta} &  \\
		A_2\ar[r]^{x_2} &B_2 \ar[r]^{y_2} & C \ar@{-->}[r]^{\delta'} & . }
	$$
	By ($\mathrm{ET4}$-$2$), there exists a morphism  $ g_1:B_{1}\rightarrow B_2 $ which gives a morphism of $\mathbb{E}$-triangles
	$$\xymatrix{
		A_1 \ar[d]_{f_1} \ar[r]^{x_1} & B_1  \ar[r]^{y_1}\ar@{-->}[d]_{g_1} & C \ar@{=}[d] \ar@{-->}[r]^{\delta} &  \\
		A_2\ar[r]^{x_2} &B_2 \ar[r]^{y_2} & C \ar@{-->}[r]^{\delta'} &  }
	$$
	and moreover, the leftmost square is a homotopy cartesian  where $y_2^{*}\delta$ is the differential. In this case, we have an $\mathbb{E}$-triangle $ A_1\stackrel{\left(\begin{smallmatrix}
		-f_1\\	x_1
		\end{smallmatrix}\right)}{\longrightarrow} A_2\oplus B_1 \stackrel{\left(\begin{smallmatrix}
		x_2 , g_1
		\end{smallmatrix}\right)}{\longrightarrow}B_2\stackrel{
		y_2^{*}\delta}{\dashrightarrow} $. Since $y_2(g-g_1)=0$, by Lemma \ref{lem2.0}
	there is a morphism $m: B_1\rightarrow A_2$ such that $g-g_1=x_2 m$. Set $ f = f_1+mx_1 $, then
	$x_2 f=x_2f_1+x_2mx_1=g_1x_1+(g-g_1)x_1=gx_1$. The following commutative diagram
	$$\xymatrix{
		A_1\ar[r]^{\left(\begin{smallmatrix}
			-f_1\\	x_1
			\end{smallmatrix}\right)}\ar@{=}[d] & A_2\oplus B_1\ar[r]^{\left(\begin{smallmatrix}
			x_2 , g_1
			\end{smallmatrix}\right)}\ar[d]^{\left(\begin{smallmatrix}
			1 & -m \\ 0 & 1
			\end{smallmatrix}\right)} & B_2\ar@{=}[d]\\
		A_1\ar[r]^{\left(\begin{smallmatrix}
			-f\\	x_1
			\end{smallmatrix}\right)} & A_2\oplus B_1\ar[r]^{\left(\begin{smallmatrix}
			x_2 , g
			\end{smallmatrix}\right)} & B_2\\
	}$$
	implies that $ A_1\stackrel{\left(\begin{smallmatrix}
		-f\\	x_1
		\end{smallmatrix}\right) }{\longrightarrow} A_2 \oplus B_1  \stackrel{\left(\begin{smallmatrix}
		x_2, g
		\end{smallmatrix}\right) }{\longrightarrow}B_2\stackrel{
		y_2^{*}\delta}{\dashrightarrow} $ is an $\mathbb{E}$-triangle.
	Note that $(mx_1)_*\delta=0$ by Lemma \ref{lem 2.1}. Thus $f_*\delta=(f_1)_*\delta+(mx_1)_*\delta=(f_1)_*\delta=\delta'$ and (ET4-5) holds.
	
	($\mathrm{ET4}$-$5)  \Rightarrow  (\mathrm{ET4}$-$2$).
	Since $(f,\text{id}_C): \delta\rightarrow f_*\delta$ is a morphism of $\mathbb{E}$-extensions, there
	exists a morphism $g':B\rightarrow E$ which makes the diagram below commutative.
	$$\xymatrix{
		A \ar[d]_{f} \ar[r]^{x} & B  \ar[r]^{y}\ar@{-->}[d]_{g'} & C \ar@{=}[d] \ar@{-->}[r]^{\delta} &  \\
		D\ar[r]^{d} &E \ar[r]^{e} & C \ar@{-->}[r]^{f_{*}\delta} &    }
	$$
	By ($\mathrm{ET4}$-$5$), there  exists a morphism $f':A\rightarrow D$ which gives a morphism of $\mathbb{E}$-triangles
	$$\xymatrix{
		A \ar@{-->}[d]_{f'} \ar[r]^{x} & B  \ar[r]^{y}\ar[d]_{g'} & C \ar@{=}[d] \ar@{-->}[r]^{\delta} &  \\
		D\ar[r]^{d} &E \ar[r]^{e} & C \ar@{-->}[r]^{f_{*}\delta} &    }
	$$
	and moreover, the leftmost square is homotopy cartesian where $e^*\delta$ is a differential.
	Since $f'_*\delta=f_*\delta$, that is, $(f-f')_*\delta=0$,
	there exists a morphism $m: B\rightarrow D$ such that $f-f'=mx$ by Lemma \ref{lem 2.1}.
	Set $ g = g'+ dm $, then
	$gx=g'x+dmx=df'+d(f-f')=df$ and $eg=e(g'+dm)=y+edm=y$.
	The following commutative diagram
	$$\xymatrix{
		A\ar[r]^{\left(\begin{smallmatrix}
			-f'\\	x
			\end{smallmatrix}\right)}\ar@{=}[d] & D\oplus B\ar[r]^{\left(\begin{smallmatrix}
			d , g'
			\end{smallmatrix}\right)}\ar[d]^{\left(\begin{smallmatrix}
			1 & -m \\ 0 & 1
			\end{smallmatrix}\right)} & E\ar@{=}[d]\\
		A\ar[r]^{\left(\begin{smallmatrix}
			-f\\	x
			\end{smallmatrix}\right)} & D\oplus B\ar[r]^{\left(\begin{smallmatrix}
			d , g
			\end{smallmatrix}\right)} & E\\
	}$$
	implies that $ A\stackrel{ \left(\begin{smallmatrix}
		-f\\	x
		\end{smallmatrix}\right)}{\longrightarrow} D \oplus B  \stackrel{\left(\begin{smallmatrix}
		d, g
		\end{smallmatrix}\right)}{\longrightarrow}E\stackrel{
		e^{*}\delta}{\dashrightarrow} $ is an $\mathbb{E}$-triangle. Therefore, (ET4-2) holds.
\end{proof}

\begin{thm} \label{thm2}
	Let $(\mathscr{C},\mathbb{E},\mathfrak{s})$ be a pre-extriangulated category. Then $(\mathrm{ET4}$-$1)$, $(\mathrm{ET4}$-$2)$ and $(\mathrm{ET4}$-$5)$ are equivalent and self-dual.
\end{thm}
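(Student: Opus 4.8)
The plan is to obtain the whole statement from the two implications already available together with one new implication bridging the axioms with their opposites; the remainder is formal op-duality. By Lemma \ref{lem2.2} we have $(\mathrm{ET4}\text{-}1)\Rightarrow(\mathrm{ET4}\text{-}2)$, and by Theorem \ref{thm1} we have $(\mathrm{ET4}\text{-}2)\Leftrightarrow(\mathrm{ET4}\text{-}5)$; since $(\mathscr{C}^{\mathrm{op}},\mathbb{E}^{\mathrm{op}},\mathfrak{s}^{\mathrm{op}})$ is again pre-extriangulated, also $(\mathrm{ET4}\text{-}1)^{\mathrm{op}}\Rightarrow(\mathrm{ET4}\text{-}2)^{\mathrm{op}}$ and $(\mathrm{ET4}\text{-}2)^{\mathrm{op}}\Leftrightarrow(\mathrm{ET4}\text{-}5)^{\mathrm{op}}$. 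I will prove, for an arbitrary pre-extriangulated category, the single implication
$$(\mathrm{ET4}\text{-}2)\ \Longrightarrow\ (\mathrm{ET4}\text{-}1)^{\mathrm{op}}.$$
Reading it in $\mathscr{C}^{\mathrm{op}}$ gives $(\mathrm{ET4}\text{-}2)^{\mathrm{op}}\Rightarrow(\mathrm{ET4}\text{-}1)$, whence the cycle
$$(\mathrm{ET4}\text{-}1)\Rightarrow(\mathrm{ET4}\text{-}2)\Rightarrow(\mathrm{ET4}\text{-}1)^{\mathrm{op}}\Rightarrow(\mathrm{ET4}\text{-}2)^{\mathrm{op}}\Rightarrow(\mathrm{ET4}\text{-}1),$$
so these four are equivalent; adjoining $(\mathrm{ET4}\text{-}5)\Leftrightarrow(\mathrm{ET4}\text{-}2)$ and $(\mathrm{ET4}\text{-}5)^{\mathrm{op}}\Leftrightarrow(\mathrm{ET4}\text{-}2)^{\mathrm{op}}$ shows that all six axioms are equivalent, which is exactly the equivalence and the self-duality claimed.

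To prove the new implication, recall that $(\mathrm{ET4}\text{-}1)^{\mathrm{op}}$ is the homotopy pushout statement: from $\mathbb{E}$-triangles $C\stackrel{a_1}{\longrightarrow}P_1\stackrel{b_1}{\longrightarrow}Q_1\stackrel{\epsilon_1}{\dashrightarrow}$ and $C\stackrel{a_2}{\longrightarrow}P_2\stackrel{b_2}{\longrightarrow}Q_2\stackrel{\epsilon_2}{\dashrightarrow}$ sharing the first term, one must produce an object $M$, $\mathbb{E}$-triangles $P_2\stackrel{n_2}{\longrightarrow}M\stackrel{p_2}{\longrightarrow}Q_1\stackrel{(a_2)_*\epsilon_1}{\dashrightarrow}$ and $P_1\stackrel{n_1}{\longrightarrow}M\stackrel{p_1}{\longrightarrow}Q_2\stackrel{(a_1)_*\epsilon_2}{\dashrightarrow}$ fitting into the evident commutative diagram, with $p_2n_1=b_1$, $p_1n_2=b_2$, $n_1a_1=n_2a_2$ and $p_1^*\epsilon_2+p_2^*\epsilon_1=0$. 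First realize $(a_2)_*\epsilon_1$ by the triangle $P_2\stackrel{n_2}{\longrightarrow}M\stackrel{p_2}{\longrightarrow}Q_1\stackrel{(a_2)_*\epsilon_1}{\dashrightarrow}$ and apply $(\mathrm{ET4}\text{-}2)$ to $C\stackrel{a_1}{\longrightarrow}P_1\stackrel{b_1}{\longrightarrow}Q_1\stackrel{\epsilon_1}{\dashrightarrow}$ along $a_2\colon C\to P_2$; this yields $n_1\colon P_1\to M$ giving a morphism of $\mathbb{E}$-triangles over $\mathrm{id}_{Q_1}$ — so $n_1a_1=n_2a_2$ and $p_2n_1=b_1$ — and, crucially, the $\mathbb{E}$-triangle
$$C\stackrel{\binom{-a_2}{a_1}}{\longrightarrow}P_2\oplus P_1\stackrel{(n_2,\,n_1)}{\longrightarrow}M\stackrel{p_2^*\epsilon_1}{\dashrightarrow},$$
exhibiting $M$ as a homotopy pushout of $P_1\stackrel{a_1}{\longleftarrow}C\stackrel{a_2}{\longrightarrow}P_2$. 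Comparing this triangle, via $(\mathrm{ET3})$ and the projection $P_2\oplus P_1\to P_2$, with the $\mathbb{E}$-triangle $C\stackrel{-a_2}{\longrightarrow}P_2\stackrel{b_2}{\longrightarrow}Q_2\stackrel{-\epsilon_2}{\dashrightarrow}$ (an $\mathbb{E}$-triangle by Lemma \ref{lem 2.10}) produces $p_1\colon M\to Q_2$ with $p_1n_2=b_2$, $p_1n_1=0$ and $p_2^*\epsilon_1+p_1^*\epsilon_2=0$, the last being the required compatibility. It remains to check that $P_1\stackrel{n_1}{\longrightarrow}M\stackrel{p_1}{\longrightarrow}Q_2\stackrel{(a_1)_*\epsilon_2}{\dashrightarrow}$ is an $\mathbb{E}$-triangle; for this, apply $(\mathrm{ET4}\text{-}2)$ once more — now to $C\stackrel{a_2}{\longrightarrow}P_2\stackrel{b_2}{\longrightarrow}Q_2\stackrel{\epsilon_2}{\dashrightarrow}$ along $a_1$ — to obtain a second homotopy pushout $M'$ of the same span together with an $\mathbb{E}$-triangle $P_1\to M'\to Q_2\stackrel{(a_1)_*\epsilon_2}{\dashrightarrow}$, and match the defining triangles of $M$ and $M'$ via the swap isomorphism $P_2\oplus P_1\cong P_1\oplus P_2$, Lemma \ref{lem 2.10} and Lemma \ref{lem2.09}; the resulting isomorphism $M\cong M'$ intertwines the structure maps and transports the $M'$-triangle to the one wanted.

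The main obstacle is precisely this last verification. In a triangulated category both $\mathbb{E}$-triangles of the $(\mathrm{ET4}\text{-}1)^{\mathrm{op}}$ diagram and the relation $p_1^*\epsilon_2+p_2^*\epsilon_1=0$ fall straight out of $(\mathrm{TR4})$, whereas here the second $\mathbb{E}$-triangle $P_1\to M\to Q_2\dashrightarrow$ must be assembled from $(\mathrm{ET3})$, $(\mathrm{ET3})^{\mathrm{op}}$, a second application of $(\mathrm{ET4}\text{-}2)$ and the long exact sequences of Lemma \ref{lem2.0} and Lemma \ref{lem 2.1}, while keeping control of the signs introduced by the $\binom{-f}{x}$-shaped morphisms and of which of the operations $(a_i)_*,\,b_i^*,\,p_i^*,\,e^*$ acts on which $\mathbb{E}$-extension. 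That bookkeeping — not the construction of $M$, which is a single application of $(\mathrm{ET4}\text{-}2)$ — is where the care is needed; everything else is the formal op-duality packaging together with Lemma \ref{lem2.2} and Theorem \ref{thm1}.
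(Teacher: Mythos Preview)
Your strategy is the paper's: reduce to proving $(\mathrm{ET4}\text{-}2)\Rightarrow(\mathrm{ET4}\text{-}1)^{\mathrm{op}}$, apply $(\mathrm{ET4}\text{-}2)$ twice to build two homotopy pushouts $M$ and $M'$ of the same span, and identify them via $(\mathrm{ET3})$ and Lemma~\ref{lem2.09}. The logical skeleton and the closing duality argument match the paper exactly.

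There is one point where your write-up diverges and where you should be careful. You first produce $p_1\colon M\to Q_2$ by an $(\mathrm{ET3})$ fill-in (this gives $p_1n_2=b_2$ and the relation $p_1^*\epsilon_2+p_2^*\epsilon_1=0$), and only afterwards build $M'$ and the isomorphism $\varphi\colon M\xrightarrow{\sim}M'$ to check that $P_1\to M\to Q_2$ is an $\mathbb{E}$-triangle. But $(\mathrm{ET3})$ fill-ins are not unique, so the transported deflation $p_1'':=p_1'\varphi$ need not equal your $p_1$; you cannot conclude that \emph{your} $p_1$ sits in an $\mathbb{E}$-triangle just because $p_1''$ does. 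The paper avoids this by doing the steps in the opposite order: it defines $e_2:=-e_2'c^{-1}$ directly from the $M'$-data (so that $B\to M\to E$ is an $\mathbb{E}$-triangle by construction), and then reads off both $e_2m_1=g$ and $e_1^{*}\delta_1+e_2^{*}\delta_2=0$ from the identification $c^{*}(e_1^{*}\delta_1)=e_2'^{*}\delta_2$. If you reorganise your argument this way---drop the preliminary $(\mathrm{ET3})$ step, transport the $M'$-triangle to $M$, and then verify the commutativity and the sign relation for the transported map---your proof becomes literally the paper's.
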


\begin{proof} Since (ET4-2) is equivalent to (ET4-5) by Theorem \ref{thm1}, and (ET4-1) implying (ET4-2) follows from Lemma \ref{lem2.2}, by duality it remains to show that (ET4-2) implying (ET4-1)$^{\text{op}}$.
	
	Suppose there is a diagram
	$$\xymatrix{
		A\ar[d]^{f} \ar[r]^{x} & B\ar[r]^{y}  & C\ar@{-->}[r]^{\delta_1} &  \\
		D\ar[d]^{g}&&&\\
		E\ar@{-->}[d]^{\delta_2} &&&\\
		&&&}$$ whose row and column are $\mathbb{E}$-triangles.
	By ($\mathrm{ET4}$-$2$), there exists a morphism $m_2:B\rightarrow M$  such that the following diagram
	$$\xymatrix{
		A \ar[d]_{f} \ar[r]^{x} & B  \ar[r]^{y}\ar@{-->}[d]_{m_2} & C \ar@{=}[d] \ar@{-->}[r]^{\delta_1} &  \\
		D\ar[r]^{m_1} &M \ar[r]^{e_1} & C \ar@{-->}[r]^{f_{*}\delta_1} &  }	$$
	is commutative and
	$ A\stackrel{\left(\begin{smallmatrix}
		-f\\x
		\end{smallmatrix}\right)}{\longrightarrow} D\oplus B \stackrel{\left(\begin{smallmatrix}
		m_1, m_2
		\end{smallmatrix}\right)}{\longrightarrow}M\stackrel{
		e_1^{*}\delta_1}{\dashrightarrow}$ is an $\mathbb{E}$-triangle.
	By ($\mathrm{ET4}$-$2$) again, there exists a morphism $m'_1: D\rightarrow M'$ such that the following diagram
	$$\xymatrix{
		A \ar[d]_{x} \ar[r]^{f} & D  \ar[r]^{g}\ar@{-->}[d]_{m_1'} & E \ar@{=}[d] \ar@{-->}[r]^{\delta_2} &  \\
		B\ar[r]^{m_2'} &M' \ar[r]^{e_2'} & E \ar@{-->}[r]^{x_{*}\delta_2} &    }
	$$
	commutes and
	$ A\stackrel{\left(\begin{smallmatrix}
		-x\\f
		\end{smallmatrix}\right)}{\longrightarrow} B\oplus D \stackrel{\left(\begin{smallmatrix}
		m_2', m_1'
		\end{smallmatrix}\right)}{\longrightarrow}M'\stackrel{
		e_2'^{*}\delta_2}{\dashrightarrow}$ is an $\mathbb{E}$-triangle.
	By (ET3) and Lemma \ref{lem2.09}, there is an isomorphism $c:M'\rightarrow M$ such that the following diagram
	$$\xymatrix{
		A\ar@{=}[d] \ar[r]^{\left(\begin{smallmatrix}
			-f\\x
			\end{smallmatrix}\right)} & D\oplus B\ar[d]^{\left(
			\begin{smallmatrix}
			0 & -1 \\
			-1 & 0
			\end{smallmatrix}
			\right)
		} \ar[r]^{\left(\begin{smallmatrix}
			m_1', m_2'
			\end{smallmatrix}\right)} & M'\ar@{-->}[d]^{c} \ar@{-->}[r]^{
			e_2'^{*}\delta_2}& \\
		A\ar[r]^{\left(\begin{smallmatrix}
			-x\\f
			\end{smallmatrix}\right)} & B\oplus D  \ar[r]^{\left(\begin{smallmatrix}
			m_2, m_1
			\end{smallmatrix}\right)} & M \ar@{-->}[r]^{e_1^{*}\delta_1}&}
	$$ is commutative and $c^*(e_1^{*}\delta_1)=e_2'^*\delta_2$.
	Set $e_2=-e_2'c^{-1}$, then $e_1^*\delta_{1} + e_2^*\delta_{2} =0$ and we obtain a commutative diagram
	$$\xymatrix{B\ar[r]^{m_2}\ar@{=}[d]& M\ar[r]^{e_2}\ar[d]^{-c^{-1}}_{\simeq}&E\ar@{=}[d]\\
		B\ar[r]^{m_2'} & M'\ar[r]^{e_2'} & E. \\
	} $$
	Since $e_2m_1=-e_2'c^{-1}m_1=e_2'm_1'=g$, we have the following commutative diagram
	$$\xymatrix{
		A\ar[d]^{f} \ar[r]^{x} & B\ar[r]^{y}\ar[d]^{m_2}  & C\ar@{=}[d]\ar@{-->}[r]^{\delta_1} &  \\
		D\ar[d]^{g}\ar[r]^{m_1}& M\ar[r]^{e_1}\ar[d]^{e_2}&C\ar@{-->}[r]^{f_*\delta_1}&\\
		E\ar@{-->}[d]^{\delta_2}\ar@{=}[r] &E\ar@{-->}[d]^{x_*\delta_2}&&\\
		&&&}$$
	whose second row and second column are $\mathbb{E}$-triangles. Therefore, (ET4-1)$^{\text{op}}$ holds.
\end{proof}

Let	$(\mathscr{C},\mathbb{E},\mathfrak{s})$ be a pre-extriangulated category. Consider the following conditions:

(ET0) Let $f: A\rightarrow B$ and $g:B\rightarrow C$ be morphisms in $\mathscr{C}$. If both $f$ and $g$ are inflations, then so is  $gf$.

(ET0)$^{\textup{op}}$ Let $f: A\rightarrow B$ and $g:B\rightarrow C$ be morphisms in $\mathscr{C}$. If both $f$ and $g$  are deflations, then so is  $gf$.

\begin{thm}\label{thm3}
	Let	$(\mathscr{C},\mathbb{E},\mathfrak{s})$ be a pre-extriangulated category.
	Then the following statements are equivalent $:$
	
	$(a)$ $\mathscr{C}$ satisfies  $(\mathrm{ET4})$.
	
	$(b)$ $\mathscr{C}$ satisfies  $(\mathrm{ET4}$-$1)$ and \textup{(ET0)}.
	
	$(c)$ $\mathscr{C}$ satisfies  $(\mathrm{ET4}$-$2)$ and \textup{(ET0)}.
	
	$(d)$ $\mathscr{C}$ satisfies  $(\mathrm{ET4}$-$3)$ and \textup{(ET0)}.
	
	$(e)$ $\mathscr{C}$ satisfies  $(\mathrm{ET4}$-$5)$ and \textup{(ET0)}.
\end{thm}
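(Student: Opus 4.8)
The plan is to prove the cycle $(a)\Rightarrow(b)\Rightarrow(c)\Rightarrow(d)\Rightarrow(a)$ together with $(c)\Leftrightarrow(e)$, using two elementary observations. \textbf{(i)} $(\mathrm{ET4})\Rightarrow(\mathrm{ET0})$: given inflations $f\colon A\to B$, $g\colon B\to C$, pick $\mathbb{E}$-triangles $A\xrightarrow{f}B\to D\dashrightarrow$ and $B\xrightarrow{g}C\to F\dashrightarrow$ and apply $(\mathrm{ET4})$; commutativity of the upper-left square forces the first map of the second-row $\mathbb{E}$-triangle to be $gf$, so $gf$ is an inflation. \textbf{(ii)} $(\mathrm{ET4}\text{-}3)+(\mathrm{ET0})\Rightarrow(\mathrm{ET4})$: given the two $\mathbb{E}$-triangles $A\xrightarrow{f}B\xrightarrow{f'}D\xrightarrow{\delta}$ and $B\xrightarrow{g}C\xrightarrow{g'}F\xrightarrow{\delta'}$ of $(\mathrm{ET4})$, use $(\mathrm{ET0})$ to realize $gf$ by an $\mathbb{E}$-triangle $A\xrightarrow{gf}C\xrightarrow{h_0}E_0\xrightarrow{\delta_h}$; the three $\mathbb{E}$-triangles satisfy the hypotheses of $(\mathrm{ET4}\text{-}3)$ with $h=gf$, and its conclusion (the morphisms $d_0,e_0$ with $d_0f'=h_0g$, $e_0h_0=g'$, the $\mathbb{E}$-triangle $D\xrightarrow{d_0}E_0\xrightarrow{e_0}F\xrightarrow{f'_*\delta'}$, and $d_0^*\delta_h=\delta$, $f_*\delta_h=e_0^*\delta'$) is precisely the data required in the $(\mathrm{ET4})$ diagram, with $E=E_0$, $d=d_0$, $e=e_0$, $\delta''=\delta_h$.

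Granting \textbf{(i)} and \textbf{(ii)}: $(a)\Rightarrow(b)$ follows from Lemma~\ref{lem2.11} and \textbf{(i)}; $(b)\Rightarrow(c)$ is Lemma~\ref{lem2.2}; $(d)\Rightarrow(a)$ is \textbf{(ii)}; and $(c)\Leftrightarrow(e)$ is Theorem~\ref{thm1} — in each case $(\mathrm{ET0})$ is simply carried along unchanged (and $(a)\Rightarrow(d)$ could also be quoted from Lemma~\ref{lem2.12}). Thus everything reduces to the single implication $(c)\Rightarrow(d)$, i.e. that $(\mathrm{ET4}\text{-}2)$ and $(\mathrm{ET0})$ imply $(\mathrm{ET4}\text{-}3)$.

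To prove this, take $\mathbb{E}$-triangles $A\xrightarrow{f}B\xrightarrow{f'}D\xrightarrow{\delta_f}$, $B\xrightarrow{g}C\xrightarrow{g'}F\xrightarrow{\delta_g}$ and $A\xrightarrow{h}C\xrightarrow{h_0}E_0\xrightarrow{\delta_h}$ with $h=gf$. Apply $(\mathrm{ET4}\text{-}2)$ to the $\mathbb{E}$-triangle $B\xrightarrow{g}C\xrightarrow{g'}F\xrightarrow{\delta_g}$, the morphism $f'\colon B\to D$ and a chosen $\mathbb{E}$-triangle $D\xrightarrow{d}E\xrightarrow{e}F\xrightarrow{f'_*\delta_g}$ realizing $f'_*\delta_g$: this yields a morphism $\bar g\colon C\to E$ with $\bar gg=df'$, $e\bar g=g'$ and an $\mathbb{E}$-triangle $B\xrightarrow{\binom{-f'}{g}}D\oplus C\xrightarrow{(d,\,\bar g)}E\xrightarrow{e^*\delta_g}$, so that the square with horizontal maps $g,d$ and vertical maps $f',\bar g$ is homotopy cartesian (with differential $e^*\delta_g$). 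By $(\mathrm{ET3})$ fix $d_0\colon D\to E_0$ with $d_0f'=h_0g$ and $d_0^*\delta_h=\delta_f$. Since $\bar gh=\bar ggf=df'f=0$, Lemma~\ref{lem2.0} applied to $A\xrightarrow{gf}C\xrightarrow{h_0}E_0$ gives $u\colon E_0\to E$ with $uh_0=\bar g$; and since $(d_0,\,h_0)\binom{-f'}{g}=0$, Lemma~\ref{lem2.0} applied to the homotopy cartesian $\mathbb{E}$-triangle gives $v\colon E\to E_0$ with $vd=d_0$, $v\bar g=h_0$.

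The crux — the step I expect to be the real obstacle — is to show $u$ and $v$ are mutually inverse isomorphisms, equivalently that $A\xrightarrow{gf}C\xrightarrow{\bar g}E$ is itself an $\mathbb{E}$-triangle: Lemma~\ref{lem2.0} delivers $vuh_0=v\bar g=h_0$ and $uv\bar g=uh_0=\bar g$, but this alone does not force $vu=\mathrm{id}$ or $uv=\mathrm{id}$, since deflations need not be epimorphisms. I would close this gap by also running the dual construction, which is available because $(\mathrm{ET4}\text{-}2)$ is self-dual by Theorem~\ref{thm2}, so $(\mathrm{ET4}\text{-}2)^{\mathrm{op}}$ holds too; this produces the complementary identities on the $D$-component, so that $uv$ and $vu$ act as the identity on all of $E$ and $E_0$ — alternatively, once $A\xrightarrow{gf}C\xrightarrow{\bar g}E$ is recognized as an $\mathbb{E}$-triangle, $u$ is an isomorphism by Lemma~\ref{lem2.09}. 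Granting this, transporting $D\xrightarrow{d}E\xrightarrow{e}F\xrightarrow{f'_*\delta_g}$ along the isomorphism $v$ gives the third-column $\mathbb{E}$-triangle $D\xrightarrow{d_0}E_0\xrightarrow{e_0}F\xrightarrow{f'_*\delta_g}$ with $e_0:=ev^{-1}$; the commutativities $d_0f'=h_0g$ and $e_0h_0=g'$ follow from $vd=d_0$, $v\bar g=h_0$, $df'=\bar gg$, $e\bar g=g'$; $d_0^*\delta_h=\delta_f$ holds by the choice of $d_0$; and $f_*\delta_h=e_0^*\delta_g$ is checked via Lemmas~\ref{lem2.0} and \ref{lem 2.1} on the morphism of $\mathbb{E}$-triangles $(\mathrm{id}_A,g,d_0)$ linking $A\xrightarrow{f}B\xrightarrow{f'}D\xrightarrow{\delta_f}$ to $A\xrightarrow{gf}C\xrightarrow{h_0}E_0\xrightarrow{\delta_h}$ — completing $(\mathrm{ET4}\text{-}3)$, and hence the theorem.
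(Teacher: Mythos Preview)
Your global structure --- the cycle $(a)\Rightarrow(b)\Rightarrow(c)\Rightarrow(d)\Rightarrow(a)$ together with $(c)\Leftrightarrow(e)$ --- matches the paper's, and your observations (i) and (ii) are exactly the paper's arguments for $(a)\Rightarrow(b)$ and $(d)\Rightarrow(a)$.

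The genuine gap is in your key step $(c)\Rightarrow(d)$. You apply $(\mathrm{ET4}\text{-}2)$ to produce a \emph{new} object $E$ and then try to identify it with the given $E_0$ via maps $u,v$. As you correctly note, $vuh_0=h_0$ and $uv\bar g=\bar g$ do not force $vu=1$ or $uv=1$: the exact sequence of Lemma~\ref{lem2.0} gives exactness only from the second term on, so deflations are merely weak cokernels, not epimorphisms. Your proposed fix, ``running the dual construction'', does not close this as stated: applying $(\mathrm{ET4}\text{-}2)^{\mathrm{op}}$ to $A\to C\to E_0$ along $d_0$ produces a second homotopy cartesian $\mathbb{E}$-triangle $B\to D\oplus C\to E_0$, but its inflation is $\binom{-f'}{\check g}$ for some $\check g$ with $(\check g-g)f=0$, not $\binom{-f'}{g}$; the resulting isomorphism $E\cong E_0$ sends $d$ to $d_0+h_0\lambda$ (where $\check g-g=\lambda f'$), not to $d_0$, so further adjustment is needed. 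Your final claim that $f_*\delta_h=e_0^*\delta_g$ follows from Lemmas~\ref{lem2.0} and~\ref{lem 2.1} applied to $(1_A,g,d_0)$ is also unjustified: that morphism only yields $d_0^*\delta_h=\delta_f$, which you already have.

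The paper avoids all of this by proving $(e)\Rightarrow(d)$ rather than $(c)\Rightarrow(d)$. The point is that $(\mathrm{ET4}\text{-}5)^{\mathrm{op}}$ (available via Theorem~\ref{thm2}) takes a \emph{given} commutative square and makes it homotopy cartesian with the \emph{given} third object: applied to the square $(f',g,h_0)$ it produces $d\colon D\to E_0$ directly (so $E=E_0$ from the start and $d^*\delta_h=\delta_f$ comes for free), and a second application to the square with $(0,1)\colon D\oplus C\to C$ yields $e\colon E_0\to F$ together with $e^*\delta_g=f_*\delta_h$ automatically. A short matrix manipulation then exhibits $D\to E_0\to F$ as an $\mathbb{E}$-triangle. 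Since you already have $(c)\Leftrightarrow(e)$, routing through $(e)\Rightarrow(d)$ in this way would repair your argument with no loss.
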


\begin{proof}
	By Theorem \ref{thm2}, we have (b)$\Leftrightarrow$ (c) $\Leftrightarrow$ (e). By  Lemma \ref{lem2.11}, we have (a) implying (b). Thus  we only need to show that (e) implying (d) and (d) implying (a).
	
	$(e) \Rightarrow (d)$. We will show that (ET4-5) implying (ET4-3). The proof is an analogue of \cite[3.5]{Hu}; see also \cite[Lemma 4.2]{HLN}.
	Assume that $A\stackrel{f}{\longrightarrow}B\stackrel{f'}{\longrightarrow}D\stackrel{\delta}{\dashrightarrow}$, $B\stackrel{g}{\longrightarrow}C\stackrel{g'}{\longrightarrow}F\stackrel{\delta'}{\dashrightarrow}$ and $A\stackrel{h}{\longrightarrow}C\stackrel{h'}{\longrightarrow}E\stackrel{\delta''}{\dashrightarrow}$ be  $\mathbb{E}$-triangles satisfying $h=g f$. Note that (ET4-5) is equivalent to (ET4-5)$^{\text{op}}$ by Theorem \ref{thm2}. Thus there is a morphism $d: D\rightarrow E$ such that the following diagram
	$$\xymatrix{
		A\ar@{=}[d] \ar[r]^{f} & B  \ar[r]^{f'}\ar[d]_{g} & D  \ar@{-->}[d]_{d} \ar@{-->}[r]^{\delta} &  \\
		A\ar[r]^{h} &C \ar[r]^{h'} &E\ar@{-->}[r]^{\delta''} &    }
	$$
	is a morphism of $\mathbb{E}$-triangles and the middle square is homotopy cartesian  where $f_*\delta''$ is the differential. Thus $d^*\delta''=\delta$ and $ B\stackrel{\left(\begin{smallmatrix}
		-f'\\	g
		\end{smallmatrix}\right)}{\longrightarrow} D \oplus C \stackrel{\left(\begin{smallmatrix}
		d , h'
		\end{smallmatrix}\right)}{\longrightarrow} E \stackrel{
		f_{*}\delta''}{\dashrightarrow} $ is an $\mathbb{E}$-triangle. By (ET4-5)$^{\text{op}}$ again
	there is a morphism $e:E\rightarrow F$ such that the following diagram
	$$\xymatrix{
		B\ar@{=}[d] \ar[r]^{\left(\begin{smallmatrix}
			-f'\\	g
			\end{smallmatrix}\right)} &  D \oplus C  \ar[r]^{\left(\begin{smallmatrix}
			d , h'
			\end{smallmatrix}\right)}\ar[d]^{\left(\begin{smallmatrix}
			0 ,	1
			\end{smallmatrix}\right)} & E \ar@{-->}[d]^{e}  \ar@{-->}[r]^{f_{*}\delta''} &  \\
		B\ar[r]^{g} &C \ar[r]^{g'} &F\ar@{-->}[r]^{\delta'} &    }
	$$
	is a morphism of $\mathbb{E}$-triangles and the middle square is homotopy cartesian where $\left(\begin{smallmatrix}
	-f' \\ g
	\end{smallmatrix}\right)_{*}\delta'$ is the differential. Thus $e^*\delta'=f_*\delta''$ and
	$$\xymatrix{
		D \oplus C \ar[r]^{\left(\begin{smallmatrix}
			-d & -h'\\	0 & 1
			\end{smallmatrix}\right)} &  E \oplus C  \ar[r]^{\left(\begin{smallmatrix}
			e , g'
			\end{smallmatrix}\right)} & F   \ar@{-->}[r]^{\left(\begin{smallmatrix}
			-f' \\ g
			\end{smallmatrix}\right)_{*}\delta'} &}
	$$
	is an $\mathbb{E}$-triangle.
	Note that we have the following  morphisms of $\mathbb{E}$-triangles.
	$$\xymatrix{
		D \oplus C \ar@{=}[d] \ar[r]^{\left(\begin{smallmatrix}
			-d & -h'\\	0 & 1
			\end{smallmatrix}\right)} &  E \oplus C \ar[d]^{\left(\begin{smallmatrix}
			1 & h' \\  0 & 1
			\end{smallmatrix}\right)}  \ar[r]^{\begin{pmatrix}
			e , g'
			\end{pmatrix}}\ar[d]& F \ar@{=}[d] \ar@{-->}[r]^{\left(\begin{smallmatrix}
			-f' \\ g
			\end{smallmatrix}\right)_{*}\delta'} &  \\
		D \oplus C  \ar[r]^{\left(\begin{smallmatrix}
			-d & 0\\	0 & 1
			\end{smallmatrix}\right)} \ar[d]^{\left(\begin{smallmatrix}
			-1 , 0
			\end{smallmatrix}\right)} & E \oplus C  \ar[d]^{\left(\begin{smallmatrix}
			1 , 0
			\end{smallmatrix}\right)} \ar[r]^{\left(\begin{smallmatrix}
			e , 0
			\end{smallmatrix}\right)} &F\ar@{=}[d] \ar@{-->}[r]^{\left(\begin{smallmatrix}
			-f' \\ g
			\end{smallmatrix}\right)_{*}\delta'} & \\
		B\ar[r]_{d} & E \ar[r]_{e} &F\ar@{-->}[r]_{\left(\begin{smallmatrix}
			-1 , 0
			\end{smallmatrix}\right) \left(\begin{smallmatrix}
			-f' \\ g
			\end{smallmatrix}\right)_{*}\delta'} &   }
	$$
	Since $ \begin{pmatrix}
	-1 , 0
	\end{pmatrix}_{*}
	\begin{pmatrix}
	-f' \\ g
	\end{pmatrix}_{*} \delta' =f'_* \delta' 	
	$,  we have the following commutative diagram
	$$\xymatrix{
		A\ar@{=}[d] \ar[r]^{f} &B\ar[r]^{f'}\ar[d]^{g}  & D\ar@{-->}[d]^{d}\ar@{-->}[r]^{\delta}&\\
		A \ar[r]^{h} & C\ar[r]^{h'}\ar[d]^{g'}  & E \ar@{-->}[d]^{e} \ar@{-->}[r]^{\delta''}&\\
		&F\ar@{=}[r] \ar@{-->}[d]^{\delta'}&F\ar@{-->}[d]^{f'_{*}\delta'}&\\
		&&&}$$
	such that the third column is an $\mathbb{E}$-triangle, $d^*\delta''=\delta$ and $e^*\delta'=f_*\delta''$.
	
	$(d)\Rightarrow (a)$. Let
	$A\stackrel{f}{\longrightarrow}B\stackrel{f'}{\longrightarrow}D\stackrel{\delta_{f}}{\dashrightarrow}$ and
	$B\stackrel{g}{\longrightarrow}C\stackrel{g'}{\longrightarrow}F\stackrel{\delta_{g}}{\dashrightarrow}$
	be two $\mathbb{E}$-triangles. By (ET0), $h=gf:A \rightarrow C$ is an inflation. Assume that	$A\stackrel{h}{\longrightarrow}C\stackrel{h_{0}}{\longrightarrow}E_{0}\stackrel{\delta_{h}}{\dashrightarrow}$ is an $\mathbb{E}$-triangle.
	By {\rm (ET4-3)},
	there are morphisms $d_0: D\rightarrow E_0$ and $e_0: E_0\rightarrow F$ such that  the following diagram
	$$\xymatrix{
		A\ar@{=}[d] \ar[r]^{f} &B\ar[r]^{f'}\ar[d]^{g}  & D\ar@{-->}[d]^{d_0}\ar@{-->}[r]^{\delta_{f}}&\\
		A \ar[r]^{h} & C\ar[r]^{h_0}\ar[d]^{g'}  & E_0 \ar@{-->}[d]^{e_0} \ar@{-->}[r]^{\delta_{h}}&\\
		&F\ar@{=}[r] \ar@{-->}[d]^{\delta_{g}}&F\ar@{-->}[d]^{f'_{*}(\delta_{g})}&\\
		&&&}
	$$
	is commutative 	and the third column is an $\mathbb{E}$-triangle, $d_{0}^{*}(\delta_{h})=\delta_{f}$ and
	$f_{*}\delta_{h}=e_{0}^{*}(\delta_{g})$. Therefore, (ET4) holds.
\end{proof}

\begin{lem}\label{lem3.1}
	Let	$(\mathscr{C},\mathbb{E},\mathfrak{s})$ be a pre-extriangulated category. If $\mathscr{C}$ satisfies  $(\mathrm{ET4}$-$4)$, then $\mathscr{C}$ satisfies  $(\mathrm{ET4}$-$5)$.
\end{lem}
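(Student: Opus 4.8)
Here is a proof plan.

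The plan is to derive $(\mathrm{ET4}$-$5)$ from $(\mathrm{ET4}$-$4)$ (in the form of Lemma~\ref{lem2.3}, now assumed) by applying the latter to a triple of $\mathbb{E}$-triangles built from the two given ones together with split $\mathbb{E}$-triangles, and then reading off the morphism $f$ and the homotopy cartesian square from its conclusion; no use of $(\mathrm{ET3})^{\mathrm{op}}$ is needed, the morphism $f$ being produced by $(\mathrm{ET4}$-$4)$ itself. Concretely, I would apply Lemma~\ref{lem2.3} with the first triangle $D\to A\to C$ taken to be the given $A_1\overset{x_1}{\longrightarrow}B_1\overset{y_1}{\longrightarrow}C\overset{\delta}{\dashrightarrow}$, the second triangle $A\to B\to F$ taken to be the split triangle $B_1\overset{\left(\begin{smallmatrix}1\\0\end{smallmatrix}\right)}{\longrightarrow}B_1\oplus B_2\overset{(0,1)}{\longrightarrow}B_2\overset{0}{\dashrightarrow}$, and the third triangle $E\to B\to C$ taken to be
$$B_1\oplus A_2\overset{\left(\begin{smallmatrix}1&0\\ g&-x_2\end{smallmatrix}\right)}{\longrightarrow}B_1\oplus B_2\overset{(y_1,-y_2)}{\longrightarrow}C\overset{(0,\delta')}{\dashrightarrow}.$$
This last one is an $\mathbb{E}$-triangle because it is obtained from the direct sum of the trivial triangle $B_1\overset{\mathrm{id}}{\longrightarrow}B_1\longrightarrow 0$ and the triangle $A_2\overset{-x_2}{\longrightarrow}B_2\overset{-y_2}{\longrightarrow}C\overset{\delta'}{\dashrightarrow}$ (an $\mathbb{E}$-triangle by Lemma~\ref{lem 2.10}) by applying the automorphism $\left(\begin{smallmatrix}1&0\\ g&1\end{smallmatrix}\right)$ to its middle term $B_1\oplus B_2$; and one checks that the hypothesis of Lemma~\ref{lem2.3} relating the three triangles holds, reading $(y_1,-y_2)\left(\begin{smallmatrix}1\\0\end{smallmatrix}\right)=y_1$.

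Applying $(\mathrm{ET4}$-$4)$ to this triple yields an $\mathbb{E}$-triangle $A_1\overset{d}{\longrightarrow}B_1\oplus A_2\overset{e}{\longrightarrow}B_2\overset{\theta}{\dashrightarrow}$ and a commutative diagram. Reading the relations $\left(\begin{smallmatrix}1&0\\ g&-x_2\end{smallmatrix}\right)d=\left(\begin{smallmatrix}1\\0\end{smallmatrix}\right)x_1$ and $e=(0,1)\left(\begin{smallmatrix}1&0\\ g&-x_2\end{smallmatrix}\right)$ off that diagram gives $d=\left(\begin{smallmatrix}x_1\\ f\end{smallmatrix}\right)$ for a unique morphism $f\colon A_1\to A_2$ with $x_2f=gx_1$, and $e=(g,-x_2)$. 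The first condition of Lemma~\ref{lem2.3}, namely $d_*\delta=(0,\delta')$, together with $(x_1)_*\delta=0$ (Lemma~\ref{lem2.0}), forces $f_*\delta=\delta'$; hence $(f,g,\mathrm{id}_C)$ is a morphism of $\mathbb{E}$-triangles from the first given triangle to the second. The third condition, $(0,1)^*\theta+(y_1,-y_2)^*\delta=0$, together with $y_1^*\delta=0$ (Lemma~\ref{lem2.0}), reduces to $p^*(\theta-y_2^*\delta)=0$ for the split epimorphism $p=(0,1)\colon B_1\oplus B_2\to B_2$, hence $\theta=y_2^*\delta$. Finally, transporting the $\mathbb{E}$-triangle $A_1\overset{\left(\begin{smallmatrix}x_1\\ f\end{smallmatrix}\right)}{\longrightarrow}B_1\oplus A_2\overset{(g,-x_2)}{\longrightarrow}B_2\overset{y_2^*\delta}{\dashrightarrow}$ along the isomorphism $B_1\oplus A_2\to A_2\oplus B_1$, $(b,a)\mapsto(-a,b)$, turns it into $A_1\overset{\left(\begin{smallmatrix}-f\\ x_1\end{smallmatrix}\right)}{\longrightarrow}A_2\oplus B_1\overset{(x_2,g)}{\longrightarrow}B_2\overset{y_2^*\delta}{\dashrightarrow}$, which is precisely the assertion that the leftmost square of the morphism of $\mathbb{E}$-triangles $(f,g,\mathrm{id}_C)$ is homotopy cartesian with differential $y_2^*\delta$; this establishes $(\mathrm{ET4}$-$5)$.

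The one genuinely delicate point is the construction of the third input triangle. Since $g$ need not be an inflation, one cannot feed the given data into $(\mathrm{ET4}$-$4)$ directly; the ``fattening'' by a split triangle must be arranged with exactly the right signs so that three requirements hold simultaneously: the hypothesis of Lemma~\ref{lem2.3}, the first condition producing $f_*\delta=+\delta'$ (and not $-\delta'$), and the third condition producing precisely $\theta=y_2^*\delta$. It is this threefold constraint that pins down the signs appearing in $\left(\begin{smallmatrix}1&0\\ g&-x_2\end{smallmatrix}\right)$ and in $(y_1,-y_2)$. Everything else is routine bookkeeping with direct sums of $\mathbb{E}$-extensions and the canonical identifications $\mathbb{E}(X\oplus Y,Z)\cong\mathbb{E}(X,Z)\oplus\mathbb{E}(Y,Z)$.
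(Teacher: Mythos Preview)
Your proof is correct and follows essentially the same route as the paper's: feed into $(\mathrm{ET4}\text{-}4)$ the first given triangle, a split triangle on the direct sum of the two middle terms, and a ``fattened'' third triangle obtained from the second given triangle summed with an identity triangle and twisted by an automorphism of the middle; then read off $d$, $e$, and $\theta$ from the commutative diagram and the three conditions of Lemma~\ref{lem2.3}. The only difference beyond the ordering of summands is that you extract $f_*\delta=\delta'$ directly from condition~(1), $d_*\delta=(0,\delta')$, together with $(x_1)_*\delta=0$, whereas the paper records condition~(1) but then re-derives $f_*\delta=\delta'$ via an auxiliary application of $(\mathrm{ET3})^{\mathrm{op}}$; your shortcut is legitimate and makes the argument slightly cleaner.
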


\begin{proof}
	Suppose we have the following commutative diagram
	$$\xymatrix{
		A_{1} \ar[r]^{x_{1}} & B_{1} \ar[r]^{y_{1}}\ar[d]^{g} & C\ar@{=}[d]  \ar@{-->}[r]^{\delta} &  \\
		A_{2} \ar[r]^{x_{2}} & B_{2} \ar[r]^{y_{2}} &C\ar@{-->}[r]^{\delta^{'}} &    }
	$$
	whose rows are $\mathbb{E}$-triangles.
	Consider the following commutative diagram
	$$\xymatrix{
		A_2\oplus B_{1} \ar[r]^{\left(\begin{smallmatrix}
			x_2 & 0 \\ 0 & 1
			\end{smallmatrix}\right)}\ar@{=}[d] & B_{2} \oplus B_{1} \ar[r]^{\left(\begin{smallmatrix}
			y_2 , 0
			\end{smallmatrix}\right)}\ar[d]^{\left(\begin{smallmatrix}
			1& g\\  0&1
			\end{smallmatrix}\right)} & C\ar[d]^{-1}\ar@{-->}[r]^{\delta' \oplus 0} & \\
		A_2\oplus B_{1} \ar[r]^{\left(\begin{smallmatrix}
			x_2 & g\\ 0 & 1
			\end{smallmatrix}\right)} & B_{2} \oplus B_{1} \ar[r]^{\left(\begin{smallmatrix}
			-y_2, y_1
			\end{smallmatrix}\right)} & C\ar@{-->}[r]^{-(\delta' \oplus 0)}& .   }
	$$
	Since the first row is an $\mathbb{E}$-triangle, the second row is
	also an $\mathbb{E}$-triangle by Lemma \ref{lem 2.10}.
	
	By $(\mathrm{ET4}$-$4)$, we have the following commutative diagram
	$$\xymatrix{
		A_{1}\ar@{-->}[d]_{\left(\begin{smallmatrix}
			-f\\ x_{1}
			\end{smallmatrix}\right)} \ar[r]^{x_{1 }} & B_1 \ar[r]^{y_1}\ar[d]_{\left(\begin{smallmatrix}
			0\\1
			\end{smallmatrix}\right)}  & C\ar@{=}[d]\ar@{-->}[r]^{\delta} &\\
		A_{2} \oplus B_{1}\ar@{-->}[d]_{\left(\begin{smallmatrix}
			x_2, g
			\end{smallmatrix}\right)} \ar[r]^{\left(\begin{smallmatrix}
			x_{2}  & g\\ 0 & 1
			\end{smallmatrix}\right)} & B_{2} \oplus B_{1}\ar[r]^{\left(\begin{smallmatrix}
			-y_2, y_1
			\end{smallmatrix}\right)}\ar[d]_{\left(\begin{smallmatrix}
			1,0
			\end{smallmatrix}\right)}  & C \ar@{-->}[r]^{-(\delta' \oplus 0)} &\\
		B_{2}\ar@{=}[r]\ar@{-->}[d]_{\delta''} & B_{2}\ar@{-->}[d]_{0}	 &   \\
		&&&}
	$$	
	such that the first  column is an $\mathbb{E}$-triangle, $\left(\begin{smallmatrix}
	-f\\x_1 \end{smallmatrix}\right)_*\delta=-(\delta'\oplus 0)$, $x_1$$_{*}\delta''=0$ and $(1,0)^{*}\delta''+(-y_{2},y_1)^{*}\delta=0$.
	We have
	$$ \delta''= ((1,0)\left(\begin{smallmatrix}
	1\\0 \end{smallmatrix}\right))^*\delta''=\left(\begin{smallmatrix}
	1\\0 \end{smallmatrix}\right)^*(1,0)^{*}\delta''=\left(\begin{smallmatrix}
	1\\0 \end{smallmatrix}\right)^*(y_{2},-y_1)^{*}\delta=((y_{2},-y_1)\left(\begin{smallmatrix}
	1\\0 \end{smallmatrix}\right))^{*}\delta=y_{2}^{*}\delta.$$
	It remains to show that $f_*\delta=\delta'$. In fact, by (ET3)$^{\text{op}}$ we have the following morphism of $\mathbb{E}$-triangles.
	$$\xymatrix{
		A_{2} \oplus B_{1}\ar@{-->}[d]_{\left(\begin{smallmatrix}
			-1, b
			\end{smallmatrix}\right)} \ar[r]^{\left(\begin{smallmatrix}
			x_{2}  & g\\ 0 & 1
			\end{smallmatrix}\right)} & B_{2} \oplus B_{1}\ar[r]^{\left(\begin{smallmatrix}
			-y_2, y_1
			\end{smallmatrix}\right)}\ar[d]_{\left(\begin{smallmatrix}
			-1,0
			\end{smallmatrix}\right)}  & C \ar@{-->}[r]^{-(\delta' \oplus 0)}\ar@{=}[d] &\\
		A_{2}\ar[r]^{x_2} & B_{2}\ar[r]^{y_2}	& C\ar@{-->}[r]^{\delta'} &  \\
	}$$ Note that $(-1,b)_*(-(\delta'\oplus 0))=\delta'$ and $(bx_1)_*\delta=0$ by Lemma \ref{lem 2.1}, we have
	$$f_*\delta=f_*\delta+(bx_1)_*\delta=(-1,b)_*\left(\begin{smallmatrix}
	-f\\x_1 \end{smallmatrix}\right) _*\delta=(-1,b)_*(-(\delta'\oplus 0))=\delta'.$$
	Therefore, (ET4-5) holds.
\end{proof}

\begin{thm}
	Let $(\mathscr{C},\mathbb{E},\mathfrak{s})$ be a pre-extriangulated category. If $\mathscr{C}$ satisfies  $(\mathrm{ET0})$  and  $(\mathrm{ET0})^{\textup{op}}$, then  $(\mathrm{ET4})$, $(\mathrm{ET4}$-$1)$, $(\mathrm{ET4}$-$2)$, $(\mathrm{ET4}$-$3)$, $(\mathrm{ET4}$-$4)$ and $(\mathrm{ET4}$-$5)$ are equivalent and self-dual.
\end{thm}

\begin{proof} Since $\mathscr{C}$ satisfies  $(\mathrm{ET0})$ and  $(\mathrm{ET0})^{\textup{op}}$, by Theorem \ref{thm2} and Theorem \ref{thm3}, $(\mathrm{ET4})$, $(\mathrm{ET4}$-$1)$, $(\mathrm{ET4}$-$2)$, $(\mathrm{ET4}$-$3)$ and $(\mathrm{ET4}$-$5)$ are equivalent and self-dual. In this case, if $\mathscr{C}$ satisfies (ET4), then $\mathscr{C}$ is an extriangulated category. Thus  $(\mathrm{ET4})$ implying (ET4-4) follows by Lemma \ref{lem2.3}. We complete the proof by Lemma \ref{lem3.1}.
\end{proof}

\end{document}